\pdfoutput=1
\documentclass[pdftex,12pt,a4paper]{article} 

%\setcounter{page}{1}
%\lastpage{x}
%\doinum{10.3390/------}
%\pubvolume{xx}
%\pubyear{2014}
%\history{Received: xx / Accepted: xx / Published: xx}
%------------------------------------------------------------------
% The following line should be uncommented if the LaTeX file is uploaded to arXiv.org
%\pdfoutput=1

%=================================================================

% Add packages and commands to include here
% The amsmath, amsthm, amssymb, hyperref, caption, float and color packages are loaded by the MDPI class.
%\usepackage{graphicx}
%\usepackage{subfigure,psfig}
\usepackage{xfrac,amsmath,amssymb,amsthm}
\usepackage[pdfpagemode=None,colorlinks=true,linkcolor=blue,citecolor=blue]{hyperref}

\usepackage{fullpage}
\usepackage{microtype}
\usepackage{fourier}

\usepackage{tikz}
\usetikzlibrary{calc}
\tikzset{outline/.style={color=black,thick}}
\tikzset{vertex/.style={circle,inner sep=0pt,minimum size=12mm,draw=black,fill=white}}

\usepackage{array}
\newcolumntype{y}[1]{>{\centering\arraybackslash$\displaystyle} p{#1} <{$}}   % math
\newcolumntype{x}[1]{>{\centering\arraybackslash\hspace{0pt}}p{#1}}

\newlength{\mycolumnwidth}
\usepackage{enumerate}

\newcommand\ric{\mathop{ric}}	% courbure LinYau
\newcommand{\mathd}{\mathrm{d}}

%=================================================================
%% Please use the following mathematics environments:
%\theoremstyle{mdpi}
\newcounter{thm}
\setcounter{thm}{0}
\newcounter{ex}
\setcounter{ex}{0}
\newcounter{re}
\setcounter{re}{0}
\newtheorem{Theorem}[thm]{Theorem}
\newtheorem{Lemma}[thm]{Lemma}
\newtheorem{Proposition}[thm]{Proposition}
\newtheorem{Remark}[re]{Remark}
%\newtheorem{Corollary}[thm]{Corollary}
%\newtheorem{Definition}[thm]{Definition}
%% For proofs, please use the proof environment (the amsthm package is loaded by the MDPI class).

%=================================================================

\title{Ricci curvature on polyhedral surfaces via optimal transportation}
\author{Beno\^it Loisel and Pascal Romon}
\date{}

\begin{document}

\maketitle

% Abstract
\begin{abstract}
  The problem of defining correctly geometric objects such as the curvature is
  a hard one in discrete geometry. In 2009, Ollivier \cite{Ollivier:2009p2415} defined 
  a notion of curvature applicable to a wide category of measured
  metric spaces, in particular to graphs. He named it {\emph{coarse Ricci curvature}} 
  because it coincides, up to some given factor, with the classical Ricci
  curvature, when the space is a smooth manifold. Lin, Lu \& Yau \cite{Lin:2011tt}, 
  Jost \& Liu \cite{Jost:2011wt} have used and extended this notion for graphs giving 
  estimates for the curvature and hence the diameter, in terms of the combinatorics. 
  In this paper, we describe a method for computing the coarse Ricci curvature
  and give sharper results, in the specific but crucial case of polyhedral surfaces.
  \\
  
  Keywords: discrete curvature; optimal transportation; graph theory; discrete laplacian; tiling.
  \\
  
  MSC: 05C10, 68U05, 90B06

\end{abstract}

\section{The coarse Ricci curvature of Ollivier}

Let us first recall the definition of the coarse Ricci curvature as given
originally by Ollivier in \cite{Ollivier:2009p2415}.
Since our focus is on polyhedral objects, we will use, for greater legibility,
the language of graphs and matrices, rather than more general measure
theoretic formulations. In this section, we will assume that our base space $X
= ( V, E)$ is a simple graph (no loops, no multiple edges between vertices),
unoriented and locally finite (vertices at finite distance are in finite
number) for the standard distance (the length of the shortest chain between
$x$ and $x'$):
\[ 
\forall x, x' \in V, \; d ( x, x') = \inf \left\{ n, \; n \in
   \mathbb{N}^{\ast}, \; \exists x_1, \ldots, x_{n - 1}, \; x = x_0 \sim x_1
   \sim \cdots \sim x_n = x' \right\} 
\]
where $x \sim y$ stands for the adjacency relation and adjacent vertices are
at distance $1$. We shall call this distance the uniform distance; the more
general case will be considered in section~\ref{sec:varying-length}. 
Polyhedral surfaces (a.k.a. two-dimensional cell complexes) studied afterwards will be seen 
as a special case of graphs. We do not require our graphs to be finite, but we will
nevertheless use vector and matrix notations (with possibly infinitely many
indices).
\\

For any $x \in X$, a probability measure $\mu$ on $X$ is a map $V \rightarrow
\mathbb{R}_+$ such that \hbox{$\sum_{y \in V} \mu ( y) = 1$}. Assume that for
any vertex $x$ we are given a probability measure $\mu_x$. Intuitively $\mu_x (y)$ 
is the probability of jumping from $x$ to $y$ in a random walk. For instance we can
take $\mu_x^1$ to be the uniform measure on the sphere (or 1-ring) at $x$,
$S_x = \{ y \in V,  y \sim x \}$, namely $\mu^1_x ( y) = \frac{1}{d_x}$
if $y \sim x$, where $d_x = | S_x |$ denotes the degree of $x$, and $0$
elsewhere. A {\emph{coupling}} or {\emph{transference plan}} $\xi$ between
$\mu$ and $\mu'$ is a measure on $X \times X$ whose marginals are $\mu, \mu'$
respectively:
\[ \sum_{y'} \xi ( y, y') = \mu ( y) \text{ and } \sum_y \xi ( y, y') = \mu'
   ( y') . \]
Intuitively a coupling is a plan for transporting a mass $1$ distributed
according to $\mu$ to the same mass distributed according to $\mu'$. Therefore
$\xi ( y, y')$ indicates which quantity is taken from $y$ and sent to $y'$.
Because mass is nonnegative, one can only take from $x$ the quantity $\mu (
x)$, no more no less, and the same holds at the destination point, ruled by
$\mu'$. We may view measures as vectors indexed by $V$ and couplings as
matrices, and we will use that point of view later.

The {\emph{cost}} of a given coupling $\xi$ is
\[ 
c(\xi) = \sum_{y, y' \in V} \xi ( y, y') d ( y, y') 
\]
where cost is induced by the distance traveled. The Wasserstein distance $W_1$
between probability measures $\mu, \mu'$ is
\[ W_1 ( \mu, \mu') = \inf_{\xi}  \sum_{y, y' \in V} \xi ( y, y') d ( y, y')
\]
where the infimum is taken over all couplings $\xi$ between $\mu$ and $\mu'$
(such a set will never be empty and we will show that its infimum is attained
later on). Let us focus on two simple but important examples:
\begin{enumerate}
  \item Let $\mu_x$ be the Dirac measure $\delta_x$, i.e. $\delta_x ( y)$
  equals $1$ when $y = x$ and zero elsewhere. Then there is only one coupling
  $\xi$ between $\delta_x$ and $\delta_{x'}$ and it satisfies $\xi ( y, z) =
  1$ if $y = x$ and $z = x'$, and vanishes elsewhere. Obviously $W_1 ( x, x')
  = d ( x, x')$.
  
  \item Consider now $\mu^1_x, \mu^1_{x'}$ the uniform measures on unit
  spheres around $x$ and $x'$ respectively; then a coupling $\xi$ vanishes on
  $( y, y')$ whenever $y$ lies outside the sphere $S_x$ or $y'$ outside
  $S_{x'}$. So the (a priori infinite) matrix $\xi$ has at most $d_x d_{x'}$
  nonzero terms, and we can focus on the $d_x \times d_{x'}$ submatrix $\xi (
  y, y')_{y \in S_x, y' \in S_{x'}}$ whose lines sum to $\frac{1}{d_x}$ and
  columns to $\frac{1}{d_{x'}}$. For instance, $\xi$ could be the uniform
  coupling
  \[ \xi = \frac{1}{d_x d_{x'}}  \left(\begin{array}{cccc}
       1 & \cdots & \cdots & 1\\
       \vdots &  &  & \vdots\\
       1 & \cdots & \cdots & 1
     \end{array}\right) \]
  where we have written only the submatrix.
  
  \item A variant from the above measure is the measure uniform on the ball
  $B_x = \{ x \} \cup S_x$.
\end{enumerate}
Ollivier's {\emph{coarse Ricci curvature}}{\footnote{Also called Wasserstein
curvature.}} between $x$ and $x'$ (which by the way need not be neighbors)
measures the ratio between the Wasserstein distance and the distance.
Precisely, we set
\[ \kappa^1 ( x, x') = 1 - \frac{W_1 ( \mu_x^1, \mu_{x'}^1)}{d ( x, x')}
   \cdot \]
Since $\mu_x^1$ is the uniform measure on the sphere, then $\kappa^1$ compares
the average distance between the spheres $S_x$ and $S_{x'}$ with the distance
between their centers, which indeed depends on the Ricci curvature in the smooth case 
(see \cite{Ollivier:2009p2415,Ollivier:2008p2417} for the analogy with riemannian manifolds 
which prompted this definition).

We will rather use the definition of Lin, Lu \& Yau \cite{Lin:2011tt} 
(see also Ollivier \cite{Ollivier:2008p2417}) for a smooth time variable~$t$: 
let $\mu^t_x$ be the {\emph{lazy random walk}}
\[ 
\mu_x^t ( y) = \left\{ \begin{array}{cl}
     1 - t & \text{if } y = x\\
     \frac{t}{d_x} & \text{if } y \in S_x\\
     0 & \text{otherwise}
   \end{array} \right. 
\]
so that $\mu_x^t = ( 1 - t) \delta_x + t \mu_x^1$ interpolates linearly
between the Dirac measure and the uniform measure on the sphere{\footnote{In
\cite{Lin:2011tt}, a different notation is used: the lazy random walk is parametrized by
$\alpha = 1 - t$ and the limit point corresponds to $\alpha = 1$.}}. We let
$\kappa^t ( x, x') = 1 - \frac{W_1 ( \mu^t_x, \mu^t_{x'})}{d ( x, x')}$, and
$\kappa^t ( x, x') \rightarrow 0$ as $t \rightarrow 0$. We then set
\[ 
\ric ( x, x') = \liminf_{t \rightarrow 0}  \frac{\kappa^t ( x, x')}{t} 
\]
and we will call $\ric$ the (asymptotic) {\emph{Ollivier--Ricci curvature}}. The
curvature $\ric$ is attached to a continuous Markov process whereas
$\kappa^1$ corresponds to a time-discrete process{\footnote{However both
approaches are equivalent, by considering weighted graphs and allowing loops
(i.e. weights $w_{x x}$). See \cite{Bauer:2011uu} and also \S\ref{sec:varying-length} 
for weighted graphs.}}. 
Lin, Lu \& Yau \cite{Lin:2011tt} prove the existence of the limit $\ric ( x, x')$ using
concavity properties. In the next section, we give a different proof by
linking the existence to a linear programming problem with convexity
properties.

The relevance of such a definition comes from the analogy with riemannian
manifolds but can also be seen through its applications, e.g. the existence of
an upper bound for the diameter of $X$ depending on $\ric$ (see Myers' theorem below).

%--------------------------------------------------------------

\section{A linear programming problem}

In the case of graphs, the computation of $W_1$ is surprisingly simple to
understand and implement numerically. Recall that a coupling $\xi^t$ between
$\mu_x^t$ and $\mu_{x'}^t$ is completely determined by a $( d_x + 1) \times (
d_{x'} + 1)$ submatrix, and henceforward we will identify $\xi^t$ with this
submatrix. A coupling is actually any matrix in $\mathbb{R}^{( d_x + 1)  (
d_{x'} + 1)}$ with nonnegative coefficients, subject to the following
$t$-dependent linear constraints: $\forall y \in B_x, \; \langle \xi^t, L_y
\rangle = \mu^t_x ( y)$ and $\forall y' \in B_{x'}, \; \langle \xi^t, C_{y'}
\rangle = \mu_{x'} ( y')$, for all $y \in B_x$ and $y' \in B_{x'}$, where
$L_y$ and $C_{y'}$ are the following matrices
\[ L_y = \left(\begin{array}{cccc}
     \cdots & 0 & \cdots & \cdots\\
     1 & \cdots & \cdots & 1\\
     \cdots & 0 & \cdots & \cdots\\
     \cdots & 0 & \cdots & \cdots
   \end{array}\right), \qquad 
   C_{y'} = \left(\begin{array}{cccc}
     \vdots & 1 & \vdots & \vdots\\
     0 & \vdots & 0 & 0\\
     \vdots & \vdots & \vdots & \vdots\\
     \vdots & 1 & \vdots & \vdots
   \end{array}\right) 
\]
$\langle M, N \rangle = \text{}^t MN$ is the standard inner product between
matrices. We will write the nonnegativity constraint $\langle E_{y y'}, \xi^t
\rangle \ge 0$, where $E_{y y'}$ is the basis matrix whose coefficients
all vanish except at $( y, y')$.
The set of possible couplings is therefore a {\emph{bounded}} {\emph{convex
polyhedron}} $K^t$ contained in the unit cube $[ 0, 1]^{(d_x + 1) (d_{x'} + 1)}$. 
In the following, we will also need the limit set $K^0
= \{ E_{x x'} \}$ which contains a unique coupling (see case 1 above).
\\

In order to compute $\kappa^t$, we want to minimize the cost function $c$,
which is actually linear:
\[ 
c(\xi^t) = \sum_{y, y' \in V} \xi^t ( x, y) d ( x, y) = \sum_{y \in B_x, y'
   \in B_{x'}} \xi^t ( x, y) d ( x, y) = \langle \xi^t, D \rangle 
\]
where $D$ stands for the distance matrix restricted to $B_x \times B_{x'}$, so
that $D$ is the (constant) $L^2$ gradient of~$c$. Clearly the infimum is reached,
and minimizers lie on the boundary of $K^t$. Then either the gradient~$D$ is
perpendicular to some facet of $K^t$, and the minimizer can be freely chosen
on that facet, or not, and the minimizer is unique and lies on a vertex of
$K^t$. Moreover the Kuhn--Tucker theorem gives a characterization of
minimizers in terms of Lagrange multipliers (a.k.a. Kuhn--Tucker vectors) :
$\xi^t$~minimizes $c$ on $K^t$ if and only if there exists $( \lambda_y)_{y \in
B_x}$, $( \lambda'_{y'})_{y' \in B_{x'}}$ and $( \nu_{y y'})_{y \in B_x, y'
\in B_{x'}}$ such that
\begin{equation}
  \nabla c = D = \sum_y \lambda_y L_y + \sum_{y'} \lambda'_{y'} C_y +
  \sum_{y', y'} \nu_{y y'} E_{y y'} \hspace{1em} \text{with \ } \nu_{y y'}
  \ge 0 \label{eq:Kuhn-Tucker}
\end{equation}
and
\begin{equation}
\forall y, y', \quad 
\nu_{y y'}  \langle E_{y y'}, \xi^t \rangle = \nu_{y y'} \, \xi^t ( y, y') = 0 
\label{eq:Kuhn-Tucker0}
\end{equation}
meaning that the Lagrange multipliers $\nu_{y y'}$ have to vanish unless the
inequality constraint is {\emph{active}} (or {\emph{saturated}}): $\xi^t ( y,
y') = 0$. As a consequence, (i) finding a minimizer is practically easy thanks
to numerous linear programming algorithms and (ii) proving rigorously that a
given $\xi^t$ is a minimizer requires only writing the relations
\eqref{eq:Kuhn-Tucker} and \eqref{eq:Kuhn-Tucker0} for $\xi^t \in K^t$.

The non-uniqueness is quite specific to the $W_1$ metric, when cost is
proportional to length and therefore linear instead of strictly convex 
(instead of, say, length squared as in the 2-Wasserstein metric $W_2$). 
It corresponds to the following geometric fact: transporting mass $m$ from $x$ to
$z$ is equivalent in cost to transporting the same mass $m$ from $x$ to $y$
and from $y$ to $z$, as long as $y$ is on a geodesic from $x$ to $z$ (and $\mu
(y) \ge m$, since we prohibit negative mass).
\\

Computing the Olivier--Ricci curvature requires a priori taking a derivative,
however it is actually much simpler due to the following lemma, which also
proves its existence, without the need for subtler considerations like in
\cite{Lin:2011tt}:

\begin{Lemma}
  \label{lemma-homothetic}For $t, s$ small enough, convex sets $K^t$
  and $K^{s}$ are homothetic. More precisely,
  \[ K^{s} - E_{x x'} = \frac{s}{t}  ( K^t - E_{x x'}) . \]
\end{Lemma}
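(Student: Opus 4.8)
The plan is to describe $K^t$ very explicitly as an affine slice of the positive orthant and then observe that the only $t$-dependence enters through the right-hand side of the marginal constraints, which depends \emph{affinely} on $t$ with a very simple structure. Recall that $\mu^t_x = (1-t)\delta_x + t\mu^1_x$, so writing $b^t \in \mathbb{R}^{B_x} \times \mathbb{R}^{B_{x'}}$ for the vector of prescribed marginals, we have $b^t = (1-t)\,b^0 + t\,b^1$ where $b^0$ is the marginal data of the unique coupling $E_{xx'}$ (mass $1$ at the pair $(x,x')$, $0$ elsewhere) and $b^1$ is the marginal data associated with the pair of uniform sphere measures $\mu^1_x,\mu^1_{x'}$. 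Thus $b^t - b^0 = t\,(b^1 - b^0)$.

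First I would fix the notation: $K^t = \{\xi \ge 0 : A\xi = b^t\}$ where $A$ is the $t$-independent linear map sending a matrix $\xi$ to its vector of row-sums over $B_x$ and column-sums over $B_{x'}$ (i.e. $\langle \xi, L_y\rangle$ and $\langle \xi, C_{y'}\rangle$). The key observation is that $E_{xx'} \in K^t$ is false for $t \ne 0$, but $E_{xx'} \in K^0$; instead, what matters is that $A E_{xx'} = b^0$. Now take any $\xi \in K^t$ and set $\eta = \frac{s}{t}(\xi - E_{xx'}) + E_{xx'}$. Then $A\eta = \frac{s}{t}(b^t - b^0) + b^0 = \frac{s}{t}\cdot t (b^1-b^0) + b^0 = s(b^1-b^0) + b^0 = b^s$, so $\eta$ satisfies the marginal constraints for parameter $s$. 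This shows the affine map $\xi \mapsto \frac{s}{t}(\xi - E_{xx'}) + E_{xx'}$ sends the affine hull of $K^t$ bijectively onto the affine hull of $K^s$, and it is manifestly invertible (swap the roles of $s$ and $t$). So as an identity of \emph{affine} sets, $K^s - E_{xx'} = \frac{s}{t}(K^t - E_{xx'})$ holds automatically; the content of the lemma is entirely in the \emph{nonnegativity} constraint $\xi \ge 0$ being preserved, and this is exactly where "for $t,s$ small enough" is needed.

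The main obstacle is therefore to check that the homothety respects $\xi \ge 0$, and this is where the smallness hypothesis does the work. One direction is easy: if $0 < s < t$, then $\eta = (1 - \tfrac{s}{t})E_{xx'} + \tfrac{s}{t}\xi$ is a convex combination of two nonnegative matrices, hence nonnegative — so $\frac{s}{t}(K^t - E_{xx'}) + E_{xx'} \subseteq K^s$ whenever $s \le t$, with no smallness needed beyond $K^t$ being nonempty. The reverse inclusion (equivalently, going from small $s$ to larger $t$, with ratio $\frac{t}{s} > 1$) is the delicate one: here $\eta = E_{xx'} + \frac{t}{s}(\xi - E_{xx'})$ can develop negative entries if $\xi$ has entries smaller than those of $E_{xx'}$ at a position where $E_{xx'}$ is positive, i.e. at $(x,x')$ itself, or if $\xi$ has an entry at some $(y,y')$ that must be "scaled up" past what the marginals (of size $\sim t$) can feed. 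I would argue as follows: for $t$ small, every $\xi \in K^t$ has $\xi(x,x')$ close to $1$ — indeed $\xi(x,x') \ge \mu^t_x(x) + \mu^t_{x'}(x') - 1 = 1 - 2t$ by the marginal constraints (row $x$ sums to $1-t$, column $x'$ sums to $1-t$, total mass $1$), and more generally the only entries of $\xi$ that are not forced to be $O(t)$ are $\xi(x,x')$ (which is $1 - O(t)$). Writing $\xi = E_{xx'} + t\zeta$, the constraint $\xi \ge 0$ becomes $\zeta(x,x') \ge -\tfrac{1}{t}$ and $\zeta(y,y') \ge 0$ elsewhere; for $t$ small enough $\zeta(x,x') \ge -1/t$ is implied (since $\zeta(x,x')$ is bounded below independently of $t$, as the off-diagonal mass is $O(1)$), so the effective constraints on $\zeta = \frac{1}{t}(\xi - E_{xx'})$ become $t$-independent, and $\zeta$ ranges over a fixed polyhedron $\widetilde K$. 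Then $K^t - E_{xx'} = t\widetilde K$ and $K^s - E_{xx'} = s\widetilde K$ for all sufficiently small $s,t$, which gives the stated homothety. I would present this by first identifying the "reduced" polyhedron $\widetilde K$ carefully — its defining inequalities are $\zeta \ge 0$ off the $(x,x')$ slot together with the affine constraints $A\zeta = b^1 - b^0$ — and then checking that the $(x,x')$-nonnegativity constraint on $\xi$ is redundant once $t$ is below an explicit threshold (e.g. $t \le \tfrac12$ suffices, since the total off-$(x,x')$ mass of $\xi$ is at most $2t \le 1$, forcing $\xi(x,x') \ge 1 - 2t \ge 0$). This reduces the lemma to the trivial scaling identity $t\widetilde K = \frac{t}{s}(s\widetilde K)$.
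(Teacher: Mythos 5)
Your proposal is correct and follows essentially the same route as the paper: the homothety $\xi \mapsto \frac{s}{t}(\xi - E_{xx'}) + E_{xx'}$ preserving the marginal constraints, trivial preservation of nonnegativity off the $(x,x')$ slot, and the key bound $\xi(x,x') \ge \mu^t_x(x) + \mu^t_{x'}(x') - 1 = 1-2t$ showing that the $(x,x')$-nonnegativity constraint is never active for $t < \tfrac12$. Your repackaging via the fixed reduced polyhedron $\widetilde K$ with $K^t - E_{xx'} = t\widetilde K$ is a clean way to present the same argument.
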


\begin{proof}
  First write the constraint corresponding to the lazy random walk as $\mu_x^t
  ( y) = \delta_x ( y) + t \Delta_{x y}$, where $\Delta_{x x} = - 1$ and
  $\Delta_{x y} = \frac{1}{d_x}$ iff $y \sim x$. Let $\xi^t$ lie in $K^t$ and
  $t, s$ be positive. Then $\xi^s = \frac{s}{t} \xi^t +
  \left( 1 - \frac{s}{t} \right) E_{x x'}$ lies in $K^{s}$.
  Indeed
  \[ \langle L_y, \xi^s \rangle = \frac{s}{t}  ( \delta_x ( y) +
     t \Delta_{x y}) + \left( 1 - \frac{s}{t} \right) \delta_x ( y) =
     \delta_x ( y) + s \Delta_{x y} = \mu_x^{s} ( y) \]
  \[ \langle C_{y'}, \xi^s \rangle = \frac{s}{t}  ( \delta_{x'}
     ( y') + t \Delta_{x' y'}) + \left( 1 - \frac{s}{t} \right)
     \delta_{x'} ( y') = \delta_{x'} ( y') + s \Delta_{x' y'} =
     \mu_{x'}^{s} ( y') . \]
  We see immediately that $\xi^s_{y y'} = \frac{s}{t} \xi^t_{y y'}
  \ge 0$ whenever $( y, y') \neq ( x, x')$. Moreover $\xi^s_{x x'}
  = \frac{s}{t} \xi^t_{x x'} + 1 - \frac{s}{t} \ge 0$,
  provided $\frac{s}{t} \le 1$. All the previous arguments hold
  in generality, but the nonnegativity of $\xi^s_{x x'}$ needs a
  different argument when $s \ge t$. Because $\mu_x^t$ and
  $\mu_{x'}^t$ are probability measures, $\sum_y \mu^t_x ( y) = \sum_{y'}
  \mu_{x'}^t ( y') = 1$, and for any $t$,
  \begin{eqnarray*}
    0 \le \sum_{y \neq x} \left( \sum_{y' \neq x'} \xi^t_{y y'} \right) &
    = & \sum_{y \neq x} ( \mu_x^t ( y) - \xi^t_{y x'}) = ( 1 - \mu_x^t ( x)) -
    \sum_{y \neq x} \xi^t_{y x'}\\
    & = & 1 - \mu_x^t ( x) - \big( \mu_{x'}^t ( x') - \xi^t_{x x'} \big)
  \end{eqnarray*}
  consequently $\xi^t_{x x'} \ge \mu_x^t ( x) + \mu_{x'}^t ( x') - 1 = 1 +
  t ( \Delta_{x x} + \Delta_{y y}) = 1 - 2 t$. Hence, for $t < \frac{1}{2}$, 
  $\xi^t_{x x'}$ is positive for any matrix $\xi^t$ satisfying the equality constraints 
  (and the same holds for $\xi^s$, using again that $t < \frac{1}{2}$).
  
  The signification of this positivity is that the constraint $\xi^t_{x x'}
  \ge 0$ is never saturated: there will always be some mass transported
  from $x$ to $x'$ if $t$ is small enough, because the other vertices cannot
  hold all the mass from $x$.
\end{proof}

\begin{Remark}
  The lemma holds true for $t, t'$ small enough, as long as $\Delta_{x x}$ is
  uniformly bounded on $X$, a~property which we will meet later. More precisely
  if $| \Delta_{x x'} | \le C$, then the homothety property holds for
  all $t \le \frac{1}{2 C}$. 
\end{Remark}

\begin{Proposition}
  The Ollivier--Ricci curvature $\ric$ is equal to any quotient
  $\kappa^t / t$ for $t$ small enough (e.g. $t \le \sfrac{1}{2}$).
\end{Proposition}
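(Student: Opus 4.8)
The plan is to read off the result as an immediate corollary of Lemma~\ref{lemma-homothetic}. That lemma says $K^s - E_{xx'} = \tfrac{s}{t}(K^t - E_{xx'})$ for $s,t$ small enough (say $\le \tfrac12$), so the affine map $\xi^t \mapsto E_{xx'} + \tfrac{s}{t}(\xi^t - E_{xx'})$ is a bijection from $K^t$ onto $K^s$. First I would record how the cost transforms under this bijection. Since $c(\xi)=\langle\xi,D\rangle$ is linear and $\langle E_{xx'},D\rangle = D_{xx'} = d(x,x')$, the image $\xi^s$ of $\xi^t$ satisfies
\[ c(\xi^s) = d(x,x') + \frac{s}{t}\bigl(c(\xi^t) - d(x,x')\bigr), \]
which is an increasing affine function of $c(\xi^t)$ because $s/t>0$. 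Hence minimizing $c$ over $K^s$ is the same problem as minimizing $c$ over $K^t$, and the two infima (attained, as already observed) obey
\[ W_1(\mu_x^s,\mu_{x'}^s) = d(x,x') + \frac{s}{t}\bigl(W_1(\mu_x^t,\mu_{x'}^t) - d(x,x')\bigr). \]

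Next I would plug this into the definition of $\kappa$. Dividing by $d(x,x')$ — which is positive, since the statement only concerns $x\neq x'$ — and rearranging gives
\[ \kappa^s(x,x') = \frac{s}{t}\,\kappa^t(x,x'), \qquad\text{i.e.}\qquad \frac{\kappa^s(x,x')}{s} = \frac{\kappa^t(x,x')}{t}, \]
for all $s,t \le \tfrac12$. In other words the map $t\mapsto \kappa^t(x,x')/t$ is constant on $(0,\tfrac12]$. Finally, since by definition $\ric(x,x') = \liminf_{t\to 0}\kappa^t(x,x')/t$ and the expression inside the liminf is eventually constant, the liminf is that constant, equal to $\kappa^t(x,x')/t$ for any $t\le\tfrac12$; this is exactly the claim.

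I do not expect a genuine obstacle here: the proof is pure bookkeeping once Lemma~\ref{lemma-homothetic} is available. The one point that deserves a moment's care is checking that the affine bijection of the lemma really intertwines the two linear programs — this needs only that $c$ is \emph{linear} (true) and that $s/t>0$ so that the correspondence of minimizers is order-preserving. The substantive work has in fact already been done inside the proof of Lemma~\ref{lemma-homothetic}, namely verifying that the constraint $\xi_{xx'}\ge 0$ is never saturated for small $t$, so that the two polyhedra are honestly homothetic about the common vertex $E_{xx'}$ rather than merely affinely related on part of their boundaries.
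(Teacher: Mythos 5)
Your proof is correct and follows essentially the same route as the paper: both exploit the homothety of Lemma~\ref{lemma-homothetic} to show that the optimal cost is an affine function of $t$ with value $d(x,x')$ at $t=0$, hence $\kappa^t(x,x')$ is linear in $t$ near $0$ and the quotient $\kappa^t/t$ is constant there. Your explicit remark that the affine bijection intertwines the two linear programs (because the cost transforms by an increasing affine map) is a slightly more careful justification of what the paper compresses into ``the minimizers can be chosen to be homothetic,'' but it is the same argument.
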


\begin{proof}
  As a consequence of lemma \ref{lemma-homothetic}, the gradient $D$ has the
  same projection on the affine space determined by the equality constraints,
  and the minimizers can be chose to be homothetic for $t \le t_0$ small
  enough. If $( \xi^t)$ denotes this family of homothetic minimizers:
  \[ 
  W_1 ( \mu_x^t, \mu_{x'}^t) = \langle D, \xi^t \rangle = \langle D,
     \frac{t}{t_0} \xi^{t_0} + \left( 1 - \frac{t}{t_0} \right) E_{x x'}
     \rangle = \frac{t}{t_0} W_1 ( \mu_x^{t_0}, \mu_{x'}^{t_0}) + \left( 1 -
     \frac{t}{t_0} \right) d ( x, x') 
  \]
  \[ \kappa^t ( x, x') = 1 - \frac{W_1 ( \mu_x^t, \mu_{x'}^t)}{d ( x, x')} = 1
     - \frac{t}{t_0}  \frac{W_1 ( \mu_x^{t_0}, \mu_{x'}^{t_0})}{d ( x, x')} -
     \left( 1 - \frac{t}{t_0} \right) = \frac{t}{t_0}  \left( 1 - \frac{W_1 (
     \mu_x^{t_0}, \mu_{x'}^{t_0})}{d ( x, x')} \right) = \frac{t}{t_0}
     \kappa^{t_0} 
  \]
  So $\kappa^t$ is linear for $t$ small enough and
  \[ 
  \ric ( x, x') = \frac{\mathd \kappa^t ( x, x')}{\mathd t}_{| t = 0
     } = \frac{\kappa^{t_0} ( x, x')}{t_0} \qedhere
  \]  
\end{proof}

As a consequence, computing $\ric ( x, x')$ is quite simple: one needs
only solve the linear problem for $t$ small enough (e.g. $t \le \sfrac{1}{4} $).

\begin{Remark}
  This property linking the time-continuous Olivier--Ricci curvature to the
  time-discrete curvature is true in generality, as soon as the random walk is
  lazy enough, i.e. the probability of staying at $x$ is large enough  
  (see \textup{\cite{Veysseire:2012wa}}).
\end{Remark}

Finally, we note that this optimization problem is an instance of \emph{integer} 
linear programming and as a consequence, the solution is integer-valued up to
a multiplicative constant:
\begin{Theorem}
	For any pair of adjacent vertices $x,x'$ with degrees $d,d'$, and $t=1/N$,
	there exists an optimal coupling $\xi^t$ with coefficients in $\frac{1}{N d d'} \mathbb{N}$; 
	consequently $\kappa^1(x,x')$ and $\ric(x,x')$ lie in $\frac{1}{d d'} \mathbb{Z}$.
\end{Theorem}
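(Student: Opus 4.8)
The plan is to recognise the linear program defining $W_1(\mu^t_x,\mu^t_{x'})$ as a classical transportation problem and to exploit the total unimodularity of its constraint matrix. Fix $t=1/N$. First I would clear denominators: multiplying every equality constraint $\langle L_y,\xi^t\rangle=\mu^t_x(y)$ and $\langle C_{y'},\xi^t\rangle=\mu^t_{x'}(y')$ by $Ndd'$ replaces the right-hand sides by the integers $(N-1)dd'$ at the two centres and $d'$ (resp.\ $d$) at each neighbour of $x$ (resp.\ $x'$); this is the only place the hypothesis $t=1/N$ is used, and one checks it integralises all marginals simultaneously since $N\mid Ndd'$, $d\mid Ndd'$ and $d'\mid Ndd'$. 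Thus the feasible set $K^t$ is carried by the homothety $\xi\mapsto Ndd'\,\xi$ onto a polyhedron of the form $\{\eta\ge0:\ A\eta=b\}$ with $b$ integral.

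Next I would identify $A$. Setting $\eta=Ndd'\,\xi^t\in\mathbb{R}_{\ge0}^{(d+1)(d'+1)}$, the constraints $\langle L_y,\eta\rangle$ and $\langle C_{y'},\eta\rangle$ are exactly the row-sum and column-sum equations of a transportation polytope on $B_x\times B_{x'}$, so $A$ is the node--edge incidence matrix of the complete bipartite graph $K_{d+1,d'+1}$, which is totally unimodular; no constraint other than these row/column sums and the nonnegativity $\eta\ge0$ enters $K^t$. By the standard fact that $\{\eta\ge0:\ A\eta=b\}$ has only integral vertices when $A$ is totally unimodular and $b$ is integral, every vertex of $Ndd'\cdot K^t$ lies in $\mathbb{N}^{(d+1)(d'+1)}$, hence every vertex of $K^t$ has coefficients in $\tfrac1{Ndd'}\mathbb{N}$.

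To finish the first assertion: the cost $c(\xi^t)=\langle\xi^t,D\rangle$ is linear and $K^t$ is bounded, so the minimum of $c$ is attained at a vertex of $K^t$; choosing such a vertex yields an optimal coupling $\xi^t$ with coefficients in $\tfrac1{Ndd'}\mathbb{N}$. Since $D$ is the graph-distance matrix it is integer-valued, hence $W_1(\mu^t_x,\mu^t_{x'})=\langle\xi^t,D\rangle\in\tfrac1{Ndd'}\mathbb{Z}$. Because $x\sim x'$ we have $d(x,x')=1$, so $\kappa^t(x,x')=1-W_1(\mu^t_x,\mu^t_{x'})\in\tfrac1{Ndd'}\mathbb{Z}$. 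Taking $N=1$ (so that $\mu^1_x,\mu^1_{x'}$ are the uniform measures on the spheres) gives $\kappa^1(x,x')\in\tfrac1{dd'}\mathbb{Z}$; taking any $N\ge2$ and invoking the Proposition, which yields $\ric(x,x')=\kappa^t(x,x')/t=N\kappa^t(x,x')$, gives $\ric(x,x')\in\tfrac1{dd'}\mathbb{Z}$.

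I expect the main obstacle to be the bookkeeping that makes the total-unimodularity argument apply verbatim --- pinning down precisely that $K^t$, after rescaling by $Ndd'$, is the feasible region of an \emph{integer} transportation problem and nothing more --- since everything downstream is a direct application of known linear-programming facts and of the Proposition above. A minor but genuine point to state explicitly is that $t=1$ is not admissible in the Proposition, so the claims for $\kappa^1$ and for $\ric$ must be read off from different values of $N$.
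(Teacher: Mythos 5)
Your proposal is correct and follows essentially the same route as the paper: recognize the problem as an integer transportation LP, observe that the constraint matrix is the (totally unimodular) incidence matrix of a complete bipartite graph — the paper verifies this via the standard sufficient condition on $\{-1,0,1\}$ entries and a row partition, which amounts to the same fact — rescale by $Ndd'$ to get integral marginals, and conclude via integrality of vertices of the feasible polytope. Your explicit handling of the two cases $N=1$ for $\kappa^1$ and $N\ge 2$ for $\ric$ is slightly more careful than the paper's one-line remark, but it is the same argument.
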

\begin{proof}
Let us first rewrite the constraints above as the following single linear equation.
Numbering $x=x_0$ and $(x_1,\dots,x_d)$ the neighbours of $x$, $x'=x'_0$ and $(x'_1,\dots,x'_d)$ 
the neighbours of $x'$, we consider the vector 
\[
X=(\xi^t(x_0,x'_0),\ldots,\xi^t(x_0,x'_{d'}),
\xi^t(x_1,x'_0),\ldots,\xi^t(x_1,x'_{d'}),\ldots, \xi^t(x_d,x'_0),\ldots,\xi^t(x_d,x'_{d'})).
\]
The constraints amounts to $A X = b$ for the following data
\[
\begin{pmatrix}
		1&		\cdots&	1&		&		&		&		&		&		\\
		&		&		&		&		\cdots&	&		&		&		\\
		&		&		&		&		&		&		1&		\cdots&	1\\
		1&		&		&		&		&		&		1&		&		\\
		&		\ddots&	&		&		\cdots&	&		&		\ddots&	\\
		&		&		1&		&		&		&		&		&		1
	\end{pmatrix} 
	\begin{pmatrix}
		\xi^t(x_0,x'_0)\\
		\vdots\\
		\xi^t(x_0,x_{d'})\\
		\xi^t(x_1,x'_0)\\
		\vdots\\
		\xi^t(x_d,x'_0)\\
		\vdots\\
		\xi^t(x_d,x_{d'})
	\end{pmatrix}
	=
	\begin{pmatrix}
		\mu^t_{x_0}(x_0)\\
		\vdots\\
		\mu^t_{x_0}(x_d)\\
		\mu^t_{x'_0}(x'_0)\\
		\vdots\\
		\mu^t_{x'_0}(x'_{d'})
	\end{pmatrix}
\]
The integral matrix $A$ is \emph{totally unimodular}: Every square, non-singular submatrix $B$ of $A$
has determinant $\pm 1$. Indeed $A$ satisfies the following requirements:
\begin{itemize}
	\item the entries of $A$ lie in $\{-1,0,1\}$
	\item $A$ has no more than 2 nonzero entries on each column
	\item its rows can be partitioned into two sets $I_1 = \{1,\dots,d+2\}$ and 
	$I_2 = \{d+3,\dots,d+d'+2\}$ such that if a column has two entries of the same sign, 
	their rows are in different sets.
\end{itemize}
Then, whenever $b$ is integer-valued, the vertices of the constraint set 
$\{ X \in \mathbb{R}_+^{(d+1)(d'+1)},\;AX=b\}$ are also integer-valued. 
We refer the reader to classical results of integer linear programming which can be found 
in~\cite{Papadimitriou:1998tv}.

In our setting, choose $t=1/N$, so that the coefficients of $b$ lie in 
$\frac{1}{N d d'} \mathbb{N}$. By the above remarks, so do the coefficients of $\xi^t$, 
since an optimal coupling can be chosen to be a vertex of the contraint set. 
Since the distance matrix in also integer-valued, the cost $W_1$ lies 
in $\frac{1}{N d d'} \mathbb{N}$, and for two neighbors $x,x'$, 
$\kappa^t(x,x') \in \frac{1}{N d d'} \mathbb{N}$.
The curvature $\ric(x,x')$ is obtained by diving by $t=1/N$, hence the result.
The reasoning also holds for $\kappa^1$.
\end{proof}

%--------------------------------------------------------------

\section{Curvature of discrete surfaces}

Estimates for the Ollivier--Ricci curvature are given in \cite{Lin:2011tt} and \cite{Jost:2011wt}
($\ric$ in the first paper, $\kappa^1$ in the second) for general graphs
and for some specific ones such as trees. Essentially they rely on studying
one coupling, which gives an upper bound on $W_1$, hence a lower bound on the
curvature, which may or may not be optimal. We will give below exact values,
albeit in the specific setting which concerns us: polyhedral surfaces.
Furthermore, we will always assume that vertices $x, x'$ are neighbors;
in other words, we see $\ric$ as a function on the edges. 
Actual computing of $\ric ( x, x')$ for more distant
vertices is of course possible, but much more complicated. However, it should
be noted that $\ric$ trivially enjoys a concavity property, as a direct
consequence of the triangle inequality on the distance $W_1$: if $x = x_0,
x_1, \ldots, x_n = x'$ is a geodesic path from $x$ to $x'$ then
\begin{equation}
  \kappa^t ( x_0, x_n) \ge \sum_{i = 1}^n \frac{d ( x_{i - 1}, x_i)}{d (
  x_0, x_n)} \kappa^t ( x_{i - 1}, x_i) = \frac{1}{d ( x_0, x_n)}  \sum_{i =
  1}^n \kappa^t ( x_{i - 1}, x_i)  \label{eq:min-geodesic}
\end{equation}
the latter equality holding only in the uniform metric, because $d ( x_{i -
1}, x_i) = 1$ between neighbors. This inequality passes to the limit and
applies to $\ric$ as well. The concavity property implies in particular
that if $\ric$ is bounded below on all edges, then $\ric ( x, y)$
has the same lower bound on all couples $x, y$.

We use this fact to give a trivial proof of Myers' theorem 
(see also \cite{Gallot:1990gm} for the smooth case).

\begin{Theorem}[Ollivier {\cite[prop.~23]{Ollivier:2009p2415}}]  \label{thm:BM}
  If $\ric$ is bounded below on all edges by a positive
  constant $\rho$, then $S$ is finite, and its diameter is bounded above by $2/\rho$.
\end{Theorem}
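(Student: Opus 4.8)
The plan is to sandwich $\ric(x,x')$ between a lower bound equal to $\rho$, which comes from the hypothesis together with the concavity inequality \eqref{eq:min-geodesic}, and an upper bound of order $1/d(x,x')$, which comes from a direct estimate of the Wasserstein distance. The resulting inequality $\rho\le 2/d(x,x')$ bounds the diameter, and local finiteness then upgrades ``bounded diameter'' to ``finite''.

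First I would record the lower bound. Let $x,x'$ be distinct vertices and $x=x_0\sim x_1\sim\cdots\sim x_n=x'$ a minimal geodesic, so $n=d(x,x')$ and there are $n$ edges $(x_{i-1},x_i)$. Applying \eqref{eq:min-geodesic} and letting $t\to 0$ (the concavity passes to the limit, as noted in the text), one gets $\ric(x,x')\ge\frac1n\sum_{i=1}^n\ric(x_{i-1},x_i)\ge\rho$. This is exactly the ``same lower bound on all couples'' observation recorded just above the theorem.

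Next comes the upper bound, which is the heart of the matter. Fix distinct $x,x'$ with $n=d(x,x')$ and let $\xi^t$ be \emph{any} coupling between $\mu_x^t$ and $\mu_{x'}^t$. Bounding the cost by the triangle inequality $d(y,y')\ge d(y,x')-d(y',x')$ and resumming along the marginal constraints,
\[
  c(\xi^t)=\sum_{y,y'}\xi^t(y,y')\,d(y,y')
  \ \ge\ \sum_{y}\mu_x^t(y)\,d(y,x')-\sum_{y'}\mu_{x'}^t(y')\,d(y',x').
\]
Now $\mu_{x'}^t$ carries mass $1-t$ at $x'$ and $t/d_{x'}$ on each of its $d_{x'}$ neighbours, so the second sum equals $t$; while $\mu_x^t$ carries mass $1-t$ at $x$, which is at distance $n$ from $x'$, and $t/d_x$ on each $y\sim x$, which is at distance $\ge n-1$ from $x'$ (triangle inequality), so the first sum is $\ge(1-t)\,n+t\,(n-1)=n-t$. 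Taking the infimum over couplings gives $W_1(\mu_x^t,\mu_{x'}^t)\ge n-2t$, hence
\[
  \kappa^t(x,x')=1-\frac{W_1(\mu_x^t,\mu_{x'}^t)}{d(x,x')}\le\frac{2t}{n}
  \qquad\Longrightarrow\qquad
  \ric(x,x')=\liminf_{t\to 0}\frac{\kappa^t(x,x')}{t}\le\frac{2}{d(x,x')}.
\]

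Putting the two estimates together yields $\rho\le\ric(x,x')\le 2/d(x,x')$ for every pair of distinct vertices, i.e. $d(x,x')\le 2/\rho$; as $x,x'$ were arbitrary, $\operatorname{diam}S\le 2/\rho$. It remains to deduce finiteness: $S$ is connected and locally finite, so by induction on $k$ the ball $B(x_0,k)=\bigcup_{y\in B(x_0,k-1)}B_y$ is a finite union of finite sets, hence finite; since $S$ coincides with $B\!\left(x_0,\lfloor 2/\rho\rfloor\right)$ about any chosen vertex $x_0$, it is finite. The only delicate point is the upper bound: instead of exhibiting an explicit optimal coupling one works with the $1$-Lipschitz potential $y\mapsto d(y,x')$, and the argument hinges on the laziness of the walk — it is precisely the mass $1-t$ that $\mu_x^t$ retains at $x$, at distance exactly $n$ from $x'$, that produces the term $(1-t)n$ and hence the sharp decay $\le 2/d(x,x')$ rather than a weaker bound.
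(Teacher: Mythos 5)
Your proof is correct, and the two halves match the paper's logic: the reduction from ``bounded below on edges'' to ``bounded below on all pairs'' via the concavity inequality \eqref{eq:min-geodesic} is exactly the observation the authors record just before the theorem, and your finiteness argument from local finiteness is the same. Where you genuinely diverge is in how you obtain the key estimate $W_1(\mu_x^t,\mu_{x'}^t)\ge d(x,x')-2t$. The paper gets it from the triangle inequality for the Wasserstein metric applied to the chain $\delta_x,\ \mu_x^t,\ \mu_{x'}^t,\ \delta_{x'}$, together with the computation of the jump $J^t(x)=W_1(\delta_x,\mu_x^t)=t$; you instead test an arbitrary coupling against the $1$-Lipschitz potential $y\mapsto d(y,x')$ and resum along the marginals, i.e.\ you use the easy direction of Kantorovich duality. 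Your route has the advantage of being self-contained: it does not require knowing that $W_1$ satisfies the triangle inequality (which itself needs a gluing construction for couplings), and it makes visible that the bound comes from the mass $1-t$ retained at $x$. The paper's route is shorter and isolates the quantity $J^t(x)$, which is precisely what the authors later renormalize (so that $J^t(x)=t$ again) to keep Myers' theorem valid for weighted Laplacians and varying edge lengths in Section~\ref{sec:varying-length}; your Lipschitz-potential computation would have to be redone there, whereas the jump-based inequality $d(x,y)\le J^t(x)+(1-\kappa^t(x,y))d(x,y)+J^t(y)$ carries over verbatim. One minor remark: since the paper proves that $\kappa^t/t$ is constant for small $t$, your $\liminf$ in the upper bound is an honest limit, so no care is needed there.
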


\begin{proof}
  Using the triangle inequality again on $W_1$
  \begin{eqnarray*}
    d ( x, y) & = & W_1 ( \delta_x, \delta_y)\\
    & \le & W_1 ( \delta_x, \mu_x^t) + W_1 ( \mu_x^t, \mu_y^t) + W_1 (
    \mu_y^t, \delta_y)\\
    & \le & J^t ( x) + ( 1 - \kappa^t ( x, y)) d ( x, y) + J^t ( y)
  \end{eqnarray*}
  where $J^t ( x) = W_1 ( \delta_x, \mu_x^t)$ is the {\emph{jump}} at $x$, 
  which is also the expectation $\mathbb{E}_{\mu_x^t}(d(x,.))$ of the distance to~$x$ w.r.t. 
  the probability $\mu_x^t$. For the uniform metric $J^t ( x) = t$, so that
  \[ d ( x, y) \le \frac{2}{\ric ( x, y)} \le \frac{2}{\rho}
  \]
  which gives the upper bound for the diameter. Since $S$ is locally finite,
  it is therefore finite.
\end{proof}
\bigskip

We will now give our results, and compare them with those obtained either by Jost \& Liu
\cite{Jost:2011wt} or by using Forman's definitions of Ricci curvature \cite{Forman:2003p1300}.

As first example, let us give the Ollivier--Ricci curvature for the Platonic
solids (with $\kappa^1$ as a comparison, corresponding to the non-lazy
random walk) in table~\ref{table:platonic}:
\begin{table}[h]	\centering
	\setlength{\mycolumnwidth}{22mm}
	\begin{tabular}{|c|*{5}{x{\mycolumnwidth}|}}
	\hline
	& tetrahedron & cube & octahedron & dodecahedron & icosahedron \\  \hline
	$\ric$ & $4/3$ & $\mathbf{2/3}$ & $\mathbf{1}$ & $0$ & $2/5$ \\ 	\hline
   $\kappa^1$ & $2/3$ & $0$ & $1/2$ & $- 1/3$ & $1/5$ \\   \hline
   $\frac{1}{3}$ Forman & $4/3$ & $\mathbf{2/3}$ & $2 / 3$ & $0$ & $0$ \\  \hline
	\end{tabular}
\caption{Ollivier-Ricci (asymptotic and discrete at time $1$ for the Platonic solids, 
along Forman's version of Ricci curvature (divided by $3$, to be comparable).}
\label{table:platonic}
\end{table}
This stresses the difference between~$\kappa^1$ (used in \cite{Jost:2011wt}) and
$\ric$, which exhibits, in our opinion, a more geometric{\footnote{And
less graph-theoretic.}} behavior. {\emph{In~particular, the values of
$\ric$ are sharp w.r.t. Myers' theorem}} for the cube and the octahedron. 
Forman refers to the combinatorial Ricci curvature 
for unit weights defined in~\cite{Forman:2003p1300}, 
which also satisfies a Myers' theorem, albeit with a different constant: the
diameter is bounded above by $6/\rho$, hence our choice to divide it by $3$,
to allow comparison between with the Ollivier--Ricci curvature. Here, only the
cube is optimal.

Tessellations by regular polygons fit well in this framework since all edges
have the same length. Regular tiling are the triangular, square and hexagonal
tiling. The triangular tiling corresponds to the $( 6, 6)$ case above and has
zero Ollivier--Ricci curvature, and so does the square tiling. However the
hexagonal lattice has negative Ollivier--Ricci curvature equal to $-2/3$.

The method can also be applied to semiregular tiling, but those are only vertex-transitive in general
and not edge-transitive (with the exception of the trihexagonal tiling), 
hence one must treat separately the different types of edges. For example, for the snub square tiling, 
$\ric=0$ for an edge between two triangles, but $\ric=-1/5$ for an edge between 
a triangle and a square.
\\

The results above can easily be derived using making computation by hand or by
using integer linear programming software (a program with all the above
examples using opensource software \texttt{Sage}\footnote{\url{http://www.sagemath.org/}} 
is attached to the article).
The next results however are of a more general nature, with variable degrees, and 
cannot be obtained by simple computations. We consider adjacent vertices $x, x'$ on a
triangulated surface with the following genericity hypotheses:
\begin{description}
  \item[(B)] $x, x'$ are not on the boundary,
  
  \item[(G)] for any $y \in \text{star} ( x)$ and $y' \in \text{star} ( x')$,
  there is a geodesic of length $d (y, y')$ in $\text{star} ( x) \cup
  \text{star} ( x')$.
\end{description}
Under hypothesis (G), the distance matrix $D$ in $X$ agrees with its
restriction to $\text{star} ( x) \cup \text{star} ( x')$, hence all
computations are local. For this genericity assumption to fail, one needs very
small loops close to $x$ and $x'$, which can usually be excluded as soon as
the triangulation is fine enough. Note that the Platonic solids are not
generic in that sense, and many other configurations are ruled out (e.g. $( 3,
4)$). Then we conclude with the following.

\begin{Theorem}
  Under the genericity hypotheses (B) and (G), the Ollivier--Ricci curvature
  depends only on the degrees $d, d'$ of vertices $x, x'$ and is given in
  table~\ref{table:degrees}.
\end{Theorem}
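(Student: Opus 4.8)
The plan is to reduce the computation of $\ric(x,x')$ to a finite, explicit linear program on the union of the two stars, and then to solve that program once and for all for each pair of degrees $(d,d')$, exhibiting both an optimal coupling (upper bound on $W_1$) and a dual certificate (lower bound via Kuhn--Tucker). By Lemma~\ref{lemma-homothetic} and the Proposition it suffices to work at one small value of $t$, say $t=\sfrac14$, so $\ric(x,x') = \kappa^t(x,x')/t$ with the coupling supported on $B_x\times B_{x'}$. Hypothesis (G) guarantees that the distance matrix $D$ restricted to $\mathrm{star}(x)\cup\mathrm{star}(x')$ is the true distance matrix, so the cost functional is genuinely local; hypothesis (B) ensures the combinatorial picture around the edge $xx'$ is the generic one (two triangles $xx'a$ and $xx'b$ sharing the edge, the remaining $d-3$ neighbours of $x$ and $d'-3$ neighbours of $x'$ at distance $2$ from the opposite vertex). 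The first step, then, is to set up the distance matrix $D$ on $B_x\times B_{x'}$ purely in terms of $d,d'$: entries are $0$ (for $(x,x')$), $1$ (for $x$ to a neighbour of $x'$ that is also a neighbour of $x$, i.e. $a,b$, and symmetrically), $2$, or $3$, and under (B)+(G) the pattern of these entries is determined by $d$ and $d'$ alone.

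Next I would guess the optimal transport plan. The natural strategy is: leave the mass $1-t$ at $x$ essentially in place (the homothety lemma tells us the $(x,x')$ entry is never saturated), transport the spherical mass $t/d$ at the shared neighbours $a,b$ to the corresponding $a,b$ on the $x'$ side at cost $1$ each (or cost $0$ if they coincide as vertices — but generically they do not), and match up the ``far'' neighbours of $x$ with the ``far'' neighbours of $x'$ as cheaply as the distance matrix allows, using the freedom (noted in the text) that moving mass along a geodesic costs the same as moving it directly. Because degrees differ, there will be leftover mass on the larger side that must be absorbed by $x'$ itself (distance $1$) or pushed further; balancing these flows is where the dependence on $d,d'$ enters quantitatively. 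I would then read off $W_1$ as $\langle D,\xi^t\rangle$, divide by $d(x,x')=1$, subtract from $1$, divide by $t$, and record the value in table~\ref{table:degrees}.

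To prove optimality — not merely exhibit a good coupling — I would invoke the Kuhn--Tucker characterization \eqref{eq:Kuhn-Tucker}--\eqref{eq:Kuhn-Tucker0}: produce multipliers $(\lambda_y)$, $(\lambda'_{y'})$ on the equality constraints and nonnegative $(\nu_{yy'})$ on the active inequality constraints so that $D = \sum\lambda_y L_y + \sum\lambda'_{y'}C_{y'} + \sum\nu_{yy'}E_{yy'}$ with complementary slackness. Concretely this means choosing ``potentials'' $\lambda_y,\lambda'_{y'}$ with $\lambda_y + \lambda'_{y'} \le D_{yy'}$ everywhere and equality on the support of $\xi^t$ — i.e. a $1$-Lipschitz Kantorovich potential on the local graph — and checking the maximum of $\sum(\mu^t_x-\mu^t_{x'})\lambda$ matches the cost of $\xi^t$. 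Since $D$ takes only the values $0,1,2,3$, admissible potentials are highly constrained, so this is a finite check.

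The main obstacle I anticipate is not any single calculation but the case analysis: ``generic'' still leaves genuinely different combinatorial regimes depending on how $d$ and $d'$ compare (both large, one small near the forced minimum of $4$ or $5$ imposed by the exclusions like $(3,4)$, or $d=d'$), and in each regime the optimal coupling routes the surplus mass differently and the dual potential must be adjusted accordingly. Getting a single closed-form expression valid across table~\ref{table:degrees}, and in particular pinning down exactly which inequality constraints are active so that the $\nu_{yy'}$ can be chosen nonnegative, is the delicate part; I expect the proof to proceed region by region in the $(d,d')$-plane, with the coupling and its dual certificate written down explicitly in each, and the genericity hypotheses invoked precisely to rule out the degenerate small-girth configurations where a shorter geodesic would change $D$.
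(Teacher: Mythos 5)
Your roadmap coincides with the paper's actual method: hypothesis (G) localizes the linear program to $\mathrm{star}(x)\cup\mathrm{star}(x')$, Lemma~\ref{lemma-homothetic} and the ensuing Proposition reduce everything to a single small $t$, and optimality is certified by exhibiting a coupling together with Kuhn--Tucker multipliers satisfying \eqref{eq:Kuhn-Tucker}--\eqref{eq:Kuhn-Tucker0}, case by case in $(d,d')$ (the appendix treats $d'\ge 6$ with $d\ge 5$, $d=4$, $d=3$ separately and delegates the finitely many small cases to a computer check). As a strategy this is sound and identical to the paper's. The gap is that the content of the theorem is the table of values, and these are never derived: no explicit coupling, no multipliers, and no cost are produced, so the entries $\frac{4}{d}+\frac{8}{d'}-2$ and the exceptional small-degree values remain unproved.

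The one concrete feature of the coupling you do commit to is also wrong. Under (B) the edge $xx'$ lies in exactly two triangular faces, so the third vertices $a,b$ of those faces are common neighbours of $x$ and $x'$ --- they are literally the same vertices of the graph, not ``corresponding'' vertices on two sides that ``generically do not coincide.'' Hence $\mu_x^t$ and $\mu_{x'}^t$ both charge $a$ (with masses $t/d$ and $t/d'$ respectively), and the optimal plan leaves mass $t/d'$ in place there at cost $0$, routing only the excess $t/d - t/d'$ onward (to the exclusive neighbours of $x'$, at distance $2$); transporting the full $t/d$ at cost $1$, as you propose, gives a strictly larger cost and would corrupt the table. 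Your planned dual verification would eventually detect this, but it means the primal guess must be revised before the certificate can close, and that revision --- together with the explicit region-by-region multipliers --- is precisely the part of the proof that is missing.
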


\begin{proof}
  To compute the optimal cost $W_1 ( \mu_x^t, \mu_{x'}^t)$, we need only find
  a coupling $\xi^t$ for which the Kuhn-Tucker relation (\ref{eq:Kuhn-Tucker})
  holds. Thanks to the genericity hypothesis (G), we can restrict ourselves to
  finite matrices (on $\text{star} ( x) \cup \text{star} ( x')$). Details are
  given in the section~\ref{appendix}.
  Note that the hypothesis (B) makes for simpler calculations, but they could
  obviously be extended to deal with the presence of boundary. 
\end{proof}
\begin{table}[h]
	\[ 
	\setlength{\mycolumnwidth}{11mm}
	\renewcommand*{\arraystretch}{1.8} 
	\begin{array}{|c|*{6}{y{\mycolumnwidth}|}c|}
     \hline
     ( d, d') & ( 3, 3) & ( 4, 4) & ( 4, 5) & ( 4, 6) & ( 5, 5) & ( 5, 6) &
     \text{others}\\
     \hline
     \ric & \frac{4}{3} & \frac{3}{4} & \frac{11}{20} & \frac{1}{3} & \frac{2}{5} & \frac{2}{15} &
     \displaystyle \frac{4}{d} + \frac{8}{d'} - 2 \\[5pt]
     \hline
     \kappa^1 & \frac{2}{3} & \frac{1}{2} & \frac{7}{20} & \frac{1}{4} & \frac{1}{5} & \frac{1}{10} 
     & \displaystyle \frac{4}{d} + \frac{8}{d'} - 2 \\[5pt]
     \hline
     \kappa^1 \ge & \frac{2}{3} & \frac{1}{2} & \frac{1}{5} & 0 & 0 & - \frac{1}{5} & \left\{
     \begin{array}{ll}
       \displaystyle \frac{5}{d'} - \frac{2}{3} & \text{ if } d = 3 \\[5pt]
       \displaystyle \frac{4}{d} + \frac{6}{d'} - 2 & \text{ if } d \ge 4
     \end{array} \right. \\[25pt]
     \hline
     \displaystyle \frac{1}{3}\, \text{Forman} & \frac{4}{3} & \frac{2}{3} & \frac{1}{3} & 0 & 0 
     & - \frac{1}{3} & \displaystyle  \frac{10 - d - d'}{3} \\[5pt]
     \hline
   \end{array} 
   \]
   \caption{Asymptotic Ollivier--Ricci curvature $\ric(x,x')$ according to respective 
   degrees of $x$ and $x'$, compared to the time $1$ Ollivier--Ricci $\kappa^1$, as well as 
   Forman's Ricci curvature (divided by $3$ for comparison purposes); $\kappa^1 \ge$
   refers to the estimates of Jost \& Liu.}
   \label{table:degrees}
\end{table}
The table~\ref{table:degrees} gives $\ric$ and $\kappa^1$ in function of respective
degrees $d, d'$. Because $\ric ( x, x') = \ric ( x', x)$ we may
assume without loss of generality that $d \le d'$.
We compare with Forman's expression and also to the lower bound
\[ 
\frac{\sharp ( x, x')}{d'} - \left( 1 - \frac{1}{d} - \frac{1}{d'} -
   \frac{\sharp ( x, x')}{d} \right)_+ - \left( 1 - \frac{1}{d} - \frac{1}{d'}
   - \frac{\sharp ( x, x')}{d'} \right)_+ 
\]
given by Jost \& Liu \cite{Jost:2011wt} for general graphs, where $\sharp ( x, x')$ is the number of
triangles incident to the edge ($x x'$), which under our hypotheses is always
equal to $2$. Jost \& Liu conclude that the presence of triangles improves the
lower Ricci bound. We see here that when there are {\emph{only}} triangles one
obtains an actual value, which differs from their lower bound as soon as $d'
\ge 5$.

\begin{Remark}
\begin{enumerate}
  \item The $( 3, 3)$ case is given here although it contradicts either (B) or
  (G), the latter being the tetrahedron computed above; similarly the $( 3,
  4)$ case is excluded.
  \item Zero Ollivier--Ricci curvature is attained only
  with degrees $( 6, 6)$ (regular triangular tiling), $( 4, 8)$ and $( 3, 12)$. 
  \end{enumerate}
\end{Remark}

%--------------------------------------------------------------

\section{Varying edge lengths}	\label{sec:varying-length}

While many authors have focused on the graph theory, the case of polyhedral
surfaces is somewhat different: The combinatorial structure is more
restrictive, as we have seen above, but the geometry is more varied. In
particular, edge lengths $d(x, y)$ may be different from one. 
This is partially achieved in the literature \cite{Lin:2011tt,Jost:2011wt}
by allowing {\emph{weights}} on the edges, which amounts to changing the random walk,
but we think the geometry should intervene at two levels:  measure and distance.
We will present here a general framework to approach the problem, using the Laplace operator, 
which depends on both the geometric and the combinatorial structure of $S$.
One must also note the ambiguous definition of the Ollivier--Ricci asymptotic curvature,
which plays the role of a length in Myers' theorem, and yet its definition
makes it a dimensionless quantity. Indeed multiplying all lengths by a constant $\lambda$ 
will not change $\ric$ ($W_1$ being multiplied by $c$ as well).
\\

In the following we assume that $S$ is a polyhedral (or discrete)
surface with set of vertices $V$, edges $E$ and faces $F$. Furthermore $S$
is not only locally finite, but its vertices have a maximum degree
$d_{\max}$ ($d_{\min}$ denotes the minimal degree, which is at least $2$ for
surfaces with boundary, and $3$ for surfaces without boundary). The geometry
of $S$ is determined by the geometry of its faces, namely a isometric
bijection between each face $f$ and a planar face of identical degree, with
the compatibility condition that edge lengths measured in two adjacent faces
coincide. Then two natural notions of length arise: (i) the combinatorial
length, which counts the number of edges along a path and (ii) the metric
length, where each edge length is given by the geometry. Each notion of length
yields a different distance between vertices: the combinatorial distance
$\bar{d}$, which we have used above, and the metric distance $d$. Note that if
each face is assumed to be a regular polygon with edges of length one, then
both distances agree, and metric theory coincides with graph theory. We will
make the following assumption on the geometry: the distance $d$ and $\bar{d}$
are metrically equivalent: $\exists C, \; C^{- 1}  \bar{d} \le d
\le C \bar{d}$. Such an hypothesis holds if the lengths of edges are
uniformly bounded above and below; in particular, the aspect ratio is
bounded{\footnote{This also rules out extremely large or extremely
small faces, which could happen with only the bounded aspect ratio.}}.
\\

We consider a differential operator $\Delta$ (a laplacian, see \cite{ColindeVerdiere:1998vb}) 
determined by its values $\Delta_{x y}$ for vertices $x,y$ and the usual
properties{\footnote{Note that our sign convention is such that the Laplacian
is a negative operator; \cite{ColindeVerdiere:1998vb} uses the opposite.}}:
\begin{enumerate}[(a)]
  \item $\Delta_{x y} > 0$ whenever $x \sim y$,
  
  \item $\Delta_{x y} = 0$ whenever $x \neq y$ and $x \nsim y$ (locality
  property)
  
  \item $\sum_y \Delta_{x y} = 0$, which implies that $\Delta_{x x} < 0$ 
  (note that the sum is finite due the previous assumption and the local finiteness
  of $S$).
\end{enumerate}
Often this operator is obtained by putting a weight $w_{x y}=w_{y x}$ on each edge $(x y)$. 
The degree at $x$ is then the sum $d_x = \sum_{y \sim x} w_{x y}$ and 
$\Delta_{x y}= w_{x y}/d_x$. Obviously property (c) implies $\Delta_{x x} = -1$. 
The case studied above corresponds to a graph with all weights equal to one (therefore 
unweighted), and the corresponding Laplace operator is called the {\emph{harmonic
laplacian}} $\bar{\Delta}$.

The laplacian is not a priori symmetric, i.e. $L^2$-self-adjoint (though it
could be made so w.r.t. some metric on vertices). Thanks to the finiteness assumption (b), 
we can define iterates $\Delta^k$ of $\Delta$ for integer $k$, and the $x y$
coefficient (not to be confused with $( \Delta_{x y})^k$) is
\[ 
\Delta^k_{x y} = \sum_{z_1, \ldots, z_{k - 1}} \Delta_{x z_1} \Delta_{z_1 z_2} 
\cdots \Delta_{z_{k - 1} y} 
\]
the sum being taken on all paths of length $k$ on $S$. By direct recurrence,
we see that our boundedness hypotheses imply the bound $| \Delta_{x y}^k | \le 2^k$. 
Indeed,
\[ 
\left| \sum_z \Delta_{x z} \Delta^k_{z y} \right| \le 2^k  \left|
   \sum_z \Delta_{x z} \right| \le 2^k  \left( 1 + \sum_{z \neq x} |
   \Delta_{x z} | \right) = 2^k  \left( 1 - \sum_{z \neq x} \Delta_{x z}
   \right) = 2^{k + 1} . 
\]
As a consequence, the heat semigroup $e^{t \Delta} = \sum_{k = 0}^{\infty} 
\frac{t^k}{k!} \Delta^k$ is well-defined. It acts on measures, and defines the
image measure $\delta_x^t = \delta_x e^{t \Delta}$ of the Dirac measure at $x$
by
\[ 
\delta_x^t (y) = \sum_z \delta_x (z)  ( e^{t \Delta})_{z y} 
= ( e^{t \Delta})_{x y} = \mu_x^t (y) +\mathcal{O} ( t^2) 
\]
where $\mu_x^t (y) = \delta_x (y) + t \Delta_{x y}$ is the lazy random walk
studied above (for the harmonic laplacian, but results hold in the general
case). The random walks $(\delta_x^t)_{x \in V}$ have finite first moment, as
can be inferred from the proof of the following.

\begin{Proposition}
  The Ollivier--Ricci curvature depends only on the first order expansion of
  the random walk:
  \[ \lim_{t \rightarrow 0} \frac{1}{t}  \left( 1 - \frac{W_1 ( \delta_x^t,
     \delta_y^t)}{d (x,y)} \right) = \lim_{t \rightarrow 0} \frac{1}{t} 
     \left( 1 - \frac{W_1 ( \mu_x^t, \mu_y^t)}{d ( i, j)} \right) . 
  \]
\end{Proposition}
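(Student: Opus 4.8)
The plan is to exploit the observation made just above the statement, namely $\delta_x^t (y) = \mu_x^t (y) + \mathcal{O}(t^2)$: the two random walks agree to first order in $t$, so the corresponding Wasserstein distances differ only by $\mathcal{O}(t^2)$, which disappears after dividing by $t$ and letting $t \to 0$. Since the right-hand limit exists (Proposition above, via Lemma~\ref{lemma-homothetic}), the left-hand limit then exists and equals it.

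First I would recall the elementary stability of $W_1$ under perturbation of one argument: the triangle inequality $W_1 (\alpha, \gamma) \le W_1 (\alpha, \beta) + W_1 (\beta, \gamma)$ --- already used in the proof of Theorem~\ref{thm:BM} --- gives, applied twice,
\[
\bigl| W_1 (\delta_x^t, \delta_y^t) - W_1 (\mu_x^t, \mu_y^t) \bigr| \le W_1 (\delta_x^t, \mu_x^t) + W_1 (\delta_y^t, \mu_y^t) .
\]
Thus it suffices to prove $W_1 (\delta_x^t, \mu_x^t) = \mathcal{O}(t^2)$ for each fixed vertex $x$ (which in particular forces $\delta_x^t$ to have finite first moment, the assertion alluded to just before the statement).

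To bound $W_1 (\delta_x^t, \mu_x^t)$ I would exhibit an explicit transference plan: keep the common mass $\min (\delta_x^t (y), \mu_x^t (y))$ in place at each $y$, and move the remaining defect $(\mu_x^t - \delta_x^t)_+$ --- of total mass $\varepsilon_t := \tfrac{1}{2} \| \delta_x^t - \mu_x^t \|_1$ and, since $\mu_x^t$ vanishes off $B_x$, supported inside $B_x$ --- onto $(\delta_x^t - \mu_x^t)_+$ through the normalized product coupling. Bounding $d (y, z) \le d (x, y) + d (x, z)$ and using that $d(x,y)$ is bounded (by $C$, or by $1$ in the unit-length case) for $y \in B_x$, the cost of this plan is at most $C \varepsilon_t + \sum_z (\delta_x^t - \mu_x^t)_+ (z)\, d (x, z)$. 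The contribution of vertices $z \in B_x$ is $\mathcal{O}(t^2)$, since there are at most $d_x + 1$ of them, each at bounded distance, with $| \delta_x^t (z) - \mu_x^t (z) | = \mathcal{O}(t^2)$ (the tail $\sum_{k \ge 2} \frac{t^k}{k!} \Delta^k_{x z}$ is $\mathcal{O}(t^2)$ by $| \Delta^k_{x z} | \le 2^k$). So everything reduces to the tail estimate
\[
\sum_{\bar d (x, y) \ge 2} \delta_x^t (y) = \mathcal{O}(t^2) \qquad \text{and} \qquad \sum_{\bar d (x, y) \ge 2} \delta_x^t (y)\, d (x, y) = \mathcal{O}(t^2) ,
\]
which I expect to be the one genuinely substantive point.

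Here I would combine the three structural facts of this section: $\Delta^k_{x y} = 0$ unless $\bar d (x, y) \le k$, so that $\delta_x^t (y) = \sum_{k \ge \bar d (x, y)} \frac{t^k}{k!} \Delta^k_{x y}$; the bound $| \Delta^k_{x y} | \le 2^k$; the bounded degree, giving at most $d_{\max}^{\,n}$ vertices at combinatorial distance $n$; and the metric equivalence $d \le C \bar d$. These yield, for $t$ small enough, $\sum_{\bar d (x, y) = n} \delta_x^t (y) \le d_{\max}^{\,n} \sum_{k \ge n} \frac{(2 t)^k}{k!} \le (2 d_{\max} t)^n$, and summing $C\, n\, (2 d_{\max} t)^n$ over $n \ge 2$ gives the two displayed estimates (and, over all $n \ge 0$, finiteness of the first moment of $\delta_x^t$). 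The main obstacle is exactly this step: one needs the geometric decay coming from $|\Delta^k_{xy}|\le 2^k$ to beat the polynomial-in-$n$ times exponential-in-$n$ growth of the spheres, which is why the bounded degree and the metric equivalence of $d$ and $\bar d$ are essential. Putting the pieces together gives $W_1 (\delta_x^t, \mu_x^t) = \mathcal{O}(t^2)$; dividing the inequality of the second paragraph by $t\, d (x, y)$ and letting $t \to 0$ concludes.
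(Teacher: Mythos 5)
Your proof is correct and follows essentially the same route as the paper: reduce via the triangle inequality to showing $W_1(\delta_x^t,\mu_x^t)=\mathcal{O}(t^2)$, then bound the cost of an explicit plan using $\Delta^k_{xy}=0$ for $k<\bar d(x,y)$, the bound $|\Delta^k_{xy}|\le 2^k$, the $d_{\max}^k$ count of spheres, and the metric equivalence $d\le C\bar d$. You are somewhat more careful than the paper in spelling out the coupling (keeping common mass in place and accounting for the $\mathcal{O}(t^2)$ discrepancy already present on $B_x$), but the argument is the same.
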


\begin{proof}
  Consider any coupling $\xi$ that transfers mass from points at (uniform)
  distance $\bar{d}$ from $x$ at least 2, to $x$ and its neighbors. If the
  vertex $y$ is at $\bar{d}$-distance $k$ from $x$, then $\Delta^{\ell}_{x y}
  = 0$ for $\ell < k$ and
  \[ | \delta^t_x ( y) | = \left| \sum_{\ell \ge k} \frac{t^{\ell}}{\ell
     !} \Delta^{\ell}_{x y} \right| \le \sum_{\ell \ge k} \frac{(
     2 t)^{\ell}}{\ell !} \le \frac{( 2 t)^k}{k!} e^{2 t} . \]
  The points at uniform distance $k$ from $x$ are at most $d_{\max}^k$
  numerous, and using the equivalence between distances, they will be moved at
  most by $d \le C ( k + 1)$ to $x$ or one of its neighbors:
  \begin{eqnarray*}
    W_1 ( \delta_x^t, \mu_x^t) & \le & \sum_{k = 2}^{\infty} C ( k + 1)
    d_{\max}^k  \frac{( 2 t)^k}{k!} e^{2 t} \le \frac{3 Ce^{2 t}}{2}
    \sum_{k = 2}^{\infty}  \frac{( 2 td_{\max})^k}{( k - 1) !}\\
    & = & 3 Cd_{\max} te^{2 t}  \sum_{k = 1}^{\infty}  \frac{( 2
    td_{\max})^k}{k!} \le 3 Cd_{\max} t^2 e^{2 t} e^{2 t d_{\max}}
    =\mathcal{O} ( t^2) .
  \end{eqnarray*}
  Since
  \[ | W_1 ( \delta_x^t, \delta_y^t) - W_1 ( \mu_x^t, \mu_y^t) | \le
     W_1 ( \delta_x^t, \mu_x^t) + W_1 ( \delta_y^t, \mu_y^t) =\mathcal{O} (
     t^2) \]
  we conclude that both limits coincide.
\end{proof}

As a consequence, it is natural to replace in the section above the random walk by 
$\mu_x^t = \delta_x + t \Delta_{x,.}$, for some definition of the Laplacian 
(see \cite{Bobenko:2007fl,Wardetzky:2008vh,Alexa:2011td}). However in order to recover 
the geometric properties above one needs to normalize the random walk $\mu_x^t$,
so that the jump $J^t(x)=t$, i.e. the average distance of points 
jumping from $x$ should be $t$. That amounts to setting:
\[
\mu_x^t(y) = \delta_x(y) + \frac{t \, \Delta_{x y}}{\sum_{z \sim x} d(x,z) \Delta_{x z} }
\]
equivalently one might renormalize the laplacian accordingly. As a consequence, 
$\ric$ now behaves as the inverse of a length, as expected. Furthermore, 
Myers' theorem~\ref{thm:BM} is still valid. Indeed, while equation~\eqref{eq:min-geodesic}
no longer holds when edge lengths vary, it remains true that $\ric(x,y) \ge \rho$ if $\ric$ is 
bounded below on all edges by $\rho$.
\\

\emph{An example: The rectangular parallelepiped.}

For the rectangular parallelepiped with edges of lengths $a,b,c$, the Ollivier--Ricci curvature is
\[
\ric = \frac{1}{a} - \frac{1}{a + b +c}
\]
along an edge of length $a$, and others follow (see \S\ref{ann:parallelepiped}).
For the cube, we recover $\ric = \frac{2}{3 a}$. 
If $a$ is the length of the longest edge, an application of Myers' theorem 
yields an upper bound for the diameter $\frac{2 a}{b+c}$ times greater than its actual value $a+b+c$.

\begin{Remark}
  A more general theory can be developed with non-local operators, by replacing local finiteness 
  (property (b) above) with convergence requirements. Another, still finite, natural generalization 
  of (b) is to allow $\Delta_{x y} \neq 0$ whenever $x$ and $y$ belong to the same face. 
  For a triangulated
  manifold this amounts to the usual neighborhood relation, but as soon as some faces have more than
  three edges, this makes a difference (e.g. the cube). Note however that the corresponding 
  Myers' theorem needs to be adjusted as well since the jump will change accordingly. 
  In our experiments on Platonic solids with $\mu_x$ a uniform measure on vertices of $\text{star}(x)$, 
  we did not find better diameter bounds with this method.
\end{Remark}

\begin{Remark}
	One might also be tempted to compute Ollivier--Ricci curvature on the surface $S$ seen as a smooth 
	flat surface with conical singularities (so that distances are computed between points \emph{on} 
	the faces). If vertices $x,x'$ both have nonnegative Gaussian curvature
	(a.k.a. angular defect $\alpha,\alpha' \in \mathbb{R}_+$) then by a computation analog to Ollivier's
	\cite{Ollivier:2009p2415}, we infer
	\[
	\ric = \frac{4}{3}  \left( 
	\frac{1}{2 \pi - \alpha'} \sin \frac{\alpha'}{2} + \frac{1}{2 \pi - \alpha} \sin \frac{\alpha}{2} 
	\right)
	\]
	which differs from our previous computations. This emphasizes that this setup is somewhere
	in between the smooth and the discrete setup.
\end{Remark}

%--------------------------------------------------------------

\section{Appendix: solutions for the linear programming problem on generic
triangulated surfaces} \label{appendix}

We give here the Lagrange multipliers for the linear programming problem and
the corresponding minimizer. The regular tetrahedron is given first as an
example of the method, and the main result consists of analyzing the various
cases according to their (arbitrary) degrees. Cases with degrees less or equal to $6$ 
can easily be computed by a machine and we refer to the \texttt{Sage} 
program attached.

\subsection{The regular tetrahedron}

The distance matrix for vertices $1, 2, 3, 4$ is
\[ D = \left(\begin{array}{cccc}
     0 & 1 & 1 & 1\\
     1 & 0 & 1 & 1\\
     1 & 1 & 0 & 1\\
     1 & 1 & 1 & 0
   \end{array}\right) \]
and the optimal coupling from $\mu_1^t$ to $\mu_2^t$ shifts mass $1 - \frac{4
t}{3}$ from vertex $1$ to vertex $2$ (provided $t \le $), leaving other
vertices untouched:
\[ \xi^t = \left(\begin{array}{cccc}
     \frac{t}{3} & 1 - \frac{4 t}{3} & 0 & 0\\
     0 & \frac{t}{3} & 0 & 0\\
     0 & 0 & \frac{t}{3} & 0\\
     0 & 0 & 0 & \frac{t}{3}
   \end{array}\right) \hspace{1em} \text{ with cost } \hspace{1em} \langle
   \xi^t, D \rangle = 1 - \frac{4 t}{3} \]
with Lagrange multipliers
\[ D = \left(\begin{array}{cccc}
     0 & 1 & 1 & 1\\
     1 & 0 & 1 & 1\\
     1 & 1 & 0 & 1\\
     1 & 1 & 1 & 0
   \end{array}\right) = C_2 - L_2 + \left(\begin{array}{cccc}
     0 & 0 & 1 & 1\\
     2 & 0 & 2 & 2\\
     1 & 0 & 0 & 1\\
     1 & 0 & 1 & 0
   \end{array}\right) \]
the last matrix corresponding to a linear combination of $E_{y y'}$ with
positive coefficients $\nu_{y y'}$, only where $\xi^t ( y, y') = 0$.
Conversely, it is straightforward from $\nu_{y y'} \xi^t ( y, y') = 0$ to
deduce that $\xi^t$ is unique. Hence $\kappa^t = \frac{4 t}{3}$ and
$\ric = 4/3$. The case $t = 1$ cannot be dealt with in the same way,
but admits the following optimal transference plan
\[ \xi^1 = \left(\begin{array}{cccc}
     0 & 0 & 0 & 0\\
     1/3 & 0 & 0 & 0\\
     0 & 0 & 1/3 & 0\\
     0 & 0 & 0 & 1/3
   \end{array}\right), \hspace{1em} D = L_2 - C_2 + \left( \begin{array}{cccc}
     0 & 2 & 1 & 1\\
     0 & 0 & 0 & 0\\
     1 & 2 & 0 & 1\\
     1 & 2 & 1 & 0
   \end{array} \right) \]
with cost $1/3$ and therefore curvature $\kappa^1 = 2/3$.

\begin{Remark}
  The case of degrees $( 3, 3)$ differs only in that the distance between
  vertices $3$ and $4$ is equal to $2$ instead of $1$. However the optimal
  couplings found above do not move mass from $3$ nor from $4$. Hence it is
  also optimal for the $( 3, 3)$ case. 
\end{Remark}

%--------------------------------------------------------------

\subsection{Generic triangulated surfaces}

We analyse now generic triangulated surfaces according to the degrees $d \leq d'$ of
$x$ and $x'$. In our matrix notation, $x$ will have index $1$ and $x'$ index
2. Since $x$ and $x'$ are not on the boundary, all edges containing them
belong to two triangular faces. In particular there are two vertices, with
indices $3$ and $4$, that are neighbors of both $x$ and $x'$ (see figure~\ref{fig:generic-triangulation}).
There remains $d - 3$ {\emph{exclusive}} neighbors of $x$ (that are not
neighbors of $x'$), ordered from $5$ to $d + 1$ along the border of
$\text{star} ( x)$, and $d' - 3$ exclusive neighbors of $x'$, ordered from $d
+ 2$ to $n = d + d' - 2$ along the border of $\text{star} ( x')$.

\begin{figure}[h] 	\centering
\begin{tikzpicture}[scale=1.5]
		\node [style=vertex] (0) at (-0.5, 0) {$1$};
		\node [style=vertex] (1) at (2.5, 0) {$2$};
		\node [style=vertex] (2) at (1, 2) {$3$};
		\node [style=vertex] (3) at (1, -2) {$4$};
		\node [style=vertex] (4) at (-1.25, 2) {$5$};
		\node [style=vertex] (5) at (-2.5, 1.25) {$6$};
		\node [style=vertex] (6) at (-2.5, -1) {$d$};
		\node [style=vertex] (7) at (-1.25, -2) {$d+1$};
		\node [style=vertex] (8) at (2.5, -2) {$d+2$};
		\node [style=vertex] (9) at (3.75, -1.75) {$d+3$};
		\node [style=vertex] (10) at (4.3, -1) {$d+4$};
		\node [style=vertex] (11) at (3.75, 1.75) {$n$};
		\node [style=vertex] (12) at (2.5, 2) {$n-1$};
		\node [style=vertex] (13) at (4.3, 1) {$n-2$};

		\draw [style=outline] (0) to (1);
		\draw [style=outline] (0) to (3);
		\draw [style=outline] (3) to (1);
		\draw [style=outline] (1) to (2);
		\draw [style=outline] (2) to (0);
		\draw [style=outline] (0) to (4);
		\draw [style=outline] (0) to (5);
		\draw [style=outline] (0) to (6);
		\draw [style=outline] (0) to (7);
		\draw [style=outline] (1) to (8);
		\draw [style=outline] (1) to (9);
		\draw [style=outline] (1) to (10);
		\draw [style=outline] (1) to (11);
		\draw [style=outline] (1) to (12);
		\draw [style=outline] (2) to (4);
		\draw [style=outline] (4) to (5);
		\draw [thick,dashed] (5) to ++(-120:8mm);
		\draw [thick,dashed] (6) to ++(120:8mm);
		\draw [style=outline] (6) to (7);
		\draw [style=outline] (7) to (3);
		\draw [style=outline] (3) to (8);
		\draw [style=outline] (8) to (9);
		\draw [style=outline] (9) to (10);
		\draw [thick,dashed] (10) to ++(70:8mm);
		\draw [thick,dashed] (13) to ++(-70:8mm);
		\draw [style=outline] (11) to (12);
		\draw [style=outline] (12) to (2);
		\draw [style=outline] (13) to (11);
		\draw [style=outline] (1) to (13);
\end{tikzpicture}
\caption{Generic description of $\text{star} (1) \cup \text{star} (2)$.}
\label{fig:generic-triangulation}
\end{figure}
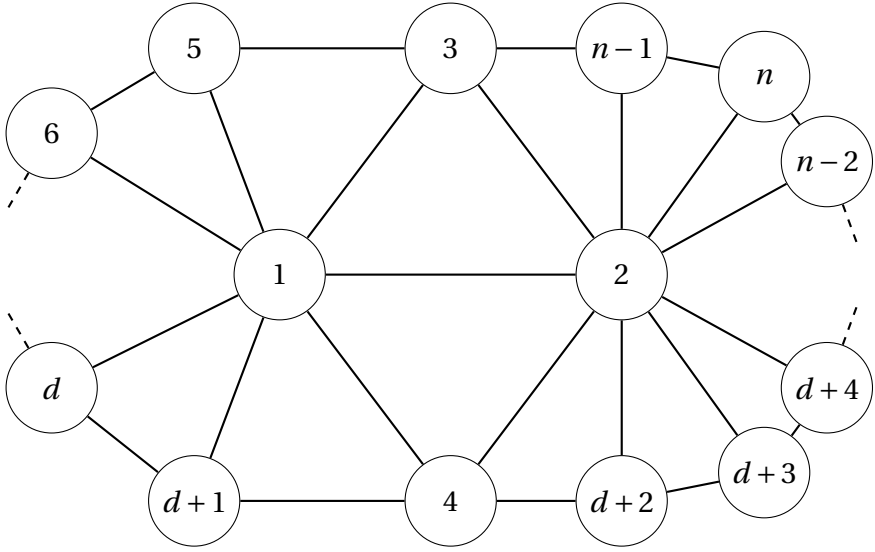

The distance matrix is
\[ 
D = \left(\begin{array}{cccc|ccccc|ccccc}
     0 & 1 & 1 & 1 & 1 & \cdots & \cdots & \cdots & 1 & 2 & \cdots & \cdots &
     \cdots & 2\\
     1 & 0 & 1 & 1 & 2 & \cdots & \cdots & \cdots & 2 & 1 & \cdots & \cdots &
     \cdots & 1\\
     1 & 1 & 0 & 2 & 1 & 2 & \cdots & \cdots & 2 & 2 & \cdots & \cdots & 2 &
     1\\
     1 & 1 & 2 & 0 & 2 & \cdots & \cdots & 2 & 1 & 1 & 2 & \cdots & \cdots &
     2\\
     \hline
     1 & 2 & 1 & 2 & 0 & 1 & 2 & \cdots & 2 & 3 & \cdots & \cdots & 3 & 2\\
     \vdots & \vdots & 2 & \vdots & 1 & \ddots & \ddots & \ddots & \vdots &
     \vdots &  &  &  & 3\\
     \vdots & \vdots & \vdots & \vdots & 2 & \ddots & \ddots & \ddots & 2 &
     \vdots &  &  &  & \vdots\\
     \vdots & \vdots & \vdots & 2 & \vdots & \ddots & \ddots & \ddots & 1 & 3
     &  &  &  & \vdots\\
     1 & 2 & 2 & 1 & 2 & \cdots & 2 & 1 & 0 & 2 & 3 & \cdots & \cdots & 3\\
     \hline
     2 & 1 & 2 & 1 & 3 & \cdots & \cdots & 3 & 2 & 0 & 1 & 2 & \cdots & 2\\
     \vdots & \vdots & \vdots & 2 & \vdots &  &  &  & 3 & 1 & \ddots & \ddots
     & \ddots & \vdots\\
     \vdots & \vdots & \vdots & \vdots & \vdots &  &  &  & \vdots & 2 & \ddots
     & \ddots & \ddots & 2\\
     \vdots & \vdots & 2 & \vdots & 3 &  &  &  & \vdots & \vdots & \ddots &
     \ddots & \ddots & 1\\
     2 & 1 & 1 & 2 & 2 & 3 & \cdots & \cdots & 3 & 2 & \cdots & 2 & 1 & 0
   \end{array}\right) 
\]
The form of the distance matrix is somewhat different when $d$ or $d'$ are very small. 
Indeed, due to the genericity assumption (B), both degrees are larger or equal to $4$. We see easily 
that the distance from an exclusive neighbor $y$ of $x$ to an exclusive neighbor $y'$ of $x'$
is in general (i.e. for $d-5$ neighbors $y$ and $d'-5$ neighbors $y'$) obtained 
by a geodesic passing though $x$ and $x'$. But when $d < 6$ or $d'< 6$, ``shortcuts'' predominate,
hence the need for \emph{ad hoc} computations.

Applying the constraints, we see that any coupling, and in particular the optimal coupling, takes 
the following block form
\[
\xi^t = \begin{pmatrix}
* & 0 & * \\ * & 0 & * \\ 0 & 0 & 0
\end{pmatrix}
\]
so, we may as well restrict to the lines $1$ through $d+1$ and columns $1,2,3,4$ and 
$d+2$ through $d+d'-2$ of matrices $D$ and $\xi^t$. Then, the $(d+1) \times (d'+1)$ submatrix 
$\tilde{D}$ of $D$ can be written (for $d, d' \geq 5$):
\[
\tilde{D} = \left( \begin{array}{cccc|ccccc}
0 & 1 & 1 & 1 		& 2 & \cdots & \cdots & \cdots & 2\\
1 & 0 & 1 & 1 		& 1 & \cdots & \cdots & \cdots & 1\\
1 & 1 & 0 & 2		& 2 & \cdots & \cdots & 2 & 1\\
1 & 1 & 2 & 0		& 1 & 2 & \cdots & \cdots & 2\\
\hline
1 & 2 & 1 & 2								& 3 & \cdots & \cdots & 3 & 2\\
\vdots & \vdots & 2 & \vdots 			& \vdots & & & & 3\\
\vdots & \vdots & \vdots & \vdots 	& \vdots & & & & \vdots \\
\vdots & \vdots & \vdots & 2 			& 3 & & & & \vdots \\
1 & 2 & 2 & 1 								& 2 & 3 & \cdots & \cdots & 3
\end{array} \right)
\]
Similarly we will write $\tilde{\xi}^t$ the relevant submatrix of the coupling $\xi$, and 
$\langle \xi^t , D \rangle =  \langle \tilde{\xi}^t , \tilde{D} \rangle = \sum_{i,j} \xi^t_{i,j} d(x_i,x_j)$.

When $d$ and $d'$ are large, the optimal coupling moves the mass mainly along 
the edge $(x,x')$. This gives a general formula for values of $d$ and $d'$.
For values of $d' \geq 6$, we set 
\[
\Delta = \tilde{D} + L_1 + 2 L_2 + L_3 + L_4 - C_1 - 2 C_2 - C_3 - C_4 - 2 C_5 
- 3 (C_6 + \cdots + C_{d'}) - 2 C_{d'+1}
\]
and show that $\Delta$ has only nonnegative coefficients.

\begin{itemize}

\item $d'\geq 6$ and $d \geq 5$:
\[ \arraycolsep=4pt
\Delta = \left( \begin{array}{cccc|ccccc} 
0 & 0 & 1 & 1   & 1 & 0 & \cdots & 0 & 1 \\
2 & 0 & 2 & 2   & 1 & \vdots & & \vdots & 1 \\
1 & 0 & 0 & 2   & 1 & \vdots & & \vdots & 0 \\
1 & 0 & 2 & 0   & 0 & 0 & \cdots & 0 & 1 \\
\hline
0 & 0 & 0 & 1   & 1 & 0 & \cdots & 0 & 0 \\
\vdots & \vdots & 1 & \vdots    & \vdots & \vdots & & \vdots & 1  \\
\vdots & \vdots & \vdots & \vdots    & \vdots & \vdots & & \vdots & \vdots \\
\vdots & \vdots & \vdots & 1    & 1 & \vdots & & \vdots & \vdots \\
0 & 0 & 1 & 0 		& 0 & 0 & \cdots & 0 & 1
\end{array} \right)
\!, 
\tilde{\xi}^t = \left( \begin{array}{cccc|ccccc}
\frac{t}{d'} & x_t & 0 & 0 	& 0 & \cdots & \cdots & \cdots & 0 \\
0 & \frac{t}{d'} & 0 & 0 		& 0 & y_t & \cdots & y_t & 0 \\
0 & 0 & \frac{t}{d'} & 0 		& 0 & y_t & \cdots & y_t & 0 \\
0 & 0 & 0 & \frac{t}{d'}		& 0 & y_t & \cdots & y_t & 0 \\
\hline
0 & \cdots & \cdots & 0		& 0 & y_t & \cdots & y_t & \frac{t}{d'} \\
\vdots &  &  & \vdots		& \vdots & z_t & \cdots & z_t & 0 \\
\vdots &  &  & \vdots		& \vdots & \vdots &  & \vdots & \vdots \\
\vdots &  &  & \vdots		& 0 & z_t & \cdots & z_t & \vdots \\
0 & \cdots & \cdots & 0		& \frac{t}{d'} & y_t & \cdots & y_t & 0
\end{array} \right)
\]
where 
\[
x_t = 1 - t - \frac{t}{d'}, \quad 
y_t = \frac{t}{d'-5} \left( \frac{1}{d} - \frac{1}{d'} \right), \quad
z_t = \frac{t}{d (d'-5)}
\] 
($x_t \ge 0$ whenever $t \le \frac{d'}{d'+1}$) and the cost is 
$W_1 = 1 + t \left(2- \frac{4}{d} - \frac{8}{d'}\right)$. 
We check easily that $\tilde\xi^t$ has nonnegative coefficients
and satisfies conditions~\eqref{eq:Kuhn-Tucker} and \eqref{eq:Kuhn-Tucker0}.

%--------------------------------------------------------------

\item $d'\geq 6$ and $d =4$: \\
The distance matrix $D$ has only $5$ rows, and its submatrix is slightly different:
\[ \arraycolsep=4pt
\tilde{D} = \left( \begin{array}{cccc|ccccc}
0 & 1 & 1 & 1		& 2 & \cdots & \cdots & \cdots & 2 \\
1 & 0 & 1 & 1		& 1 & \cdots & \cdots & \cdots & 1 \\
1 & 1 & 0 & 2 		& 2 & \cdots & \cdots & 2 & 1 \\
1 & 1 & 2 & 0		& 1 & 2 & \cdots & \cdots & 2 \\
\hline
1 & 2 & 1 & 1		& 2 & 3 & \cdots & 3 & 2
\end{array} \right), \;
\Delta = \left( \begin{array}{cccc|ccccc}
0 & 0 & 1 & 1		& 1 & 0 & \cdots & 0 & 1 \\
2 & 0 & 2 & 2 		& 1 & 0 & \cdots & 0 & 1 \\
1 & 0 & 0 & 2		& 1 & 0 & \cdots & 0 & 0 \\
1 & 0 & 2 & 0		& 0 & 0 & \cdots & 0 & 1 \\
\hline
0 & 0 & 0 & 0		& 0 & \cdots & \cdots & \cdots & 0
\end{array} \right)
\]
and an optimal transportation plan is
\[
\tilde\xi^t = \left( \begin{array}{cccc|ccccc}
\frac{t}{d'} & x_t & 0 & 0			& 0 & \cdots & \cdots & \cdots & 0 \\
0 & \frac{t}{d'} & 0 & 0			& 0 & y_t & \cdots & y_t & 0 \\
0 & 0 & \frac{2 t}{3 d'} & 0		& 0 & z_t & \cdots & z_t & \frac{2 t}{3 d'} \\
0 & 0 & 0 & \frac{2 t}{3 d'}		& \frac{2 t}{3 d'} & z_t & \cdots & z_t & 0 \\
\hline
0 & 0 & \frac{t}{3 d'} & \frac{t}{3 d'}	& \frac{t}{3 d'} & z_t & \cdots & z_t & \frac{t}{3 d'}
\end{array} \right)
\]
where 
\[
x_t = 1 - t - \frac{t}{d'}, \quad 
y_t = \frac{t}{d'-5} \left( \frac{1}{4} - \frac{1}{d'} \right), \quad
z_t = \frac{t}{d'-5} \left( \frac{1}{4} - \frac{4}{3 d'} \right), \quad
t \leq \frac{d'}{d'+1}
\]
with cost $W_1 = 1 + t - \frac{8 t}{d'} = 1 + t \left(2- \frac{4}{4} - \frac{8}{d'}\right)$.
Note that, thanks to $d' \geq 6$, we have $\frac{1}{4} - \frac{4 }{3 d'} \geq 0$, 
so $z_t \geq 0$ as needed.

%--------------------------------------------------------------

\item $d'\geq 6$ and $d =3$: \\
Similarly, we have 
\[
\tilde{D} = \left( \begin{array}{cccc|ccccc}
0 & 1 & 1 & 1		& 2 & \cdots & \cdots & \cdots & 2 \\
1 & 0 & 1 & 1		& 1 & \cdots & \cdots & \cdots & 1 \\
1 & 1 & 0 & 2		& 2 & \cdots & \cdots & 2 & 1 \\
1 & 1 & 2 & 0		& 1 & 2 & \cdots & \cdots & 2 \\
\end{array} \right) , \quad
\Delta = \left( \begin{array}{cccc|ccccc}
0 & 0 & 1 & 1		& 1 & 0 & \cdots & 0 & 1 \\
2 & 0 & 2 & 2 		& 1 & 0 & \cdots & 0 & 1 \\
1 & 0 & 0 & 2		& 1 & 0 & \cdots & 0 & 0 \\
1 & 0 & 2 & 0		& 0 & 0 & \cdots & 0 & 1 \\
\end{array} \right)
\]
\[
\tilde{\xi}^t = \left( \begin{array}{cccc|ccccc}
\frac{t}{d'} & x_t & 0 & 0			& 0 & \cdots & \cdots & \cdots & 0 \\
0 & \frac{t}{d'} & 0 & 0			& 0 & y_t & \cdots & y_t & 0 \\
0 & 0 & \frac{t}{d'} & 0			& 0 & z_t & \cdots & z_t & \frac{t}{d'} \\
0 & 0 & 0 & \frac{t}{d'}			& \frac{t}{d'} & z_t & \cdots & z_t & 0 
\end{array} \right)
\]
where
\[
x_t = 1 - t - \frac{t}{d'},\quad 
y_t = \frac{t}{d'-5} \left(\frac{1}{3} - \frac{1}{d'} \right), \quad 
z_t = \frac{t}{d'-5} \left( \frac{1}{3} - \frac{2}{d'} \right), \quad 
t \leq \frac{d'}{d'+1}
\]
and the cost is $W_1 = 1 + t \left(2- \frac{4}{3} - \frac{8}{d'}\right)$.
Thanks to $d' \geq 6$, we have $z_t \ge 0$ as needed.

\end{itemize}

We conclude that in all cases where $d' \ge 6$, we have 
$\ric = \frac{4}{d} + \frac{8}{d'} - 2$ as claimed.

%--------------------------------------------------------------

\subsection{$\kappa^1$ computation}

In the case $t = 1$, the measure $\mu_x^1$ does not put any weight on $x$, hence any transfer plan
is identically zero along the line corresponding to $x$ (and along the column corresponding to $y$).
In our notations, it~would amount to discarding the first line and the second column of all matrices 
previously written, but for clarity and comparison purposes we will keep them, though they play no role.
Again we only deal with the cases of variable degree, and leave the remaining 
cases to the computer program.

\begin{itemize}
\item $d' \geq 6$ and $d \geq 6$ 
\\
The matrix $\Delta$ is the same as above, but because of the additional constraint,
an optimal coupling is now given by
\[
\tilde\xi^1 = \frac{1}{d d'} \left( \begin{array}{cccc|ccccc}	
0 & 0 & 0 & 0		& 0 & \cdots & \cdots & \cdots & 0 \\
0 & 0 & 0 & 0		& 0 & \frac{d'}{d'-5} & \cdots & \frac{d'}{d'-5} & 0 \\
0 & 0 & d & 0		& 0 & \frac{d' - d}{d'-5} & \cdots & \frac{d' - d}{d'-5} & 0 \\
0 & 0 & 0 & d		& 0 & \frac{d' - d}{d'-5} & \cdots & \frac{d' - d}{d'-5} & 0 \\
\hline
0 & 0 & \cdots & 0				& 0 & \frac{d' - d}{d'-5} & \cdots & \frac{d' - d}{d'-5} & d \\
d & \vdots &  & \vdots 			& 0 & \frac{d' - d}{d'-5} & \cdots & \frac{d' - d}{d'-5} & 0 \\
0 & \vdots &  & \vdots			& \vdots & \frac{d'}{d'-5} & \cdots & \frac{d'}{d'-5} & 0 \\
\vdots & \vdots &  & \vdots	& \vdots & \vdots &  & \vdots & \vdots \\
\vdots & \vdots &  & \vdots	& 0 & \frac{d'}{d'-5} & \cdots & \frac{d'}{d'-5} & \vdots \\
0 & 0 & \cdots & 0				& d & \frac{d' - d}{d'-5} & \cdots & \frac{d' - d}{d'-5} & 0
\end{array} \right)
\]
with cost $W_1 = \frac{1}{d d'} (- 4 d' - 8 d + 3 d d') = 3 - \frac{4}{d} - \frac{8}{d'}$.
Since we need at least eight distinct lines, we cannot compute $\kappa^1$ by this method anymore 
when $d \leq 5$.

%--------------------------------------------------------------

\item $d = 5$ and $d' \geq 7$
\[
\tilde{D} = \left( \begin{array}{cccc|ccccc}
0 & 1 & 1 & 1 		& 2 & \cdots & \cdots & \cdots & 2\\
1 & 0 & 1 & 1 		& 1 & \cdots & \cdots & \cdots & 1\\
1 & 1 & 0 & 2		& 2 & \cdots & \cdots & 2 & 1\\
1 & 1 & 2 & 0		& 1 & 2 & \cdots & \cdots & 2\\
\hline
1 & 2 & 1 & 2								& 3 & \cdots & \cdots & 3 & 2\\
1 & 2 & 2 & 1 								& 2 & 3 & \cdots & \cdots & 3
\end{array} \right) , \quad
\Delta = \left( \begin{array}{cccc|ccccc}
0 & 0 & 1 & 1   & 1 & 0 & \cdots & 0 & 1 \\
2 & 0 & 2 & 2   & 1 & \vdots & & \vdots & 1 \\
1 & 0 & 0 & 2   & 1 & \vdots & & \vdots & 0 \\
1 & 0 & 2 & 0   & 0 & 0 & \cdots & 0 & 1 \\
\hline
0 & 0 & 0 & 1   & 1 & 0 & \cdots & 0 & 0 \\
0 & 0 & 1 & 0 		& 0 & 0 & \cdots & 0 & 1
\end{array} \right)
\]
An optimal coupling is given by
\[
\tilde\xi^1 = \frac{1}{5 d'} \left( \begin{array}{cccc|ccccc}	
0 & 0 & 0 & 0		& 0 & \cdots & \cdots & \cdots & 0 \\
0 & 0 & 0 & 0		& 0 & \frac{d'}{d'-5} & \cdots & \frac{d'}{d'-5} & 0 \\
0 & 0 & 2 & 0		& 0 & \frac{d' - 7}{d'-5} & \cdots & \frac{d' - 7}{d'-5} & 5 \\
0 & 0 & 0 & 5		& 1 & \frac{d' - 6}{d'-5} & \cdots & \frac{d' - 6}{d'-5} & 0 \\
\hline
3 & 0 & 3 & 0		& 0 & \frac{d' - 6}{d'-5} & \cdots & \frac{d' - 6}{d'-5} & 0 \\
2 & 0 & 0 & 0		& 4 & \frac{d' - 6}{d'-5} & \cdots & \frac{d' - 6}{d'-5} & 0
\end{array} \right)
\quad W_1 = \frac{11 d' - 40}{5d'} = 3 - \frac{4}{5} - \frac{8}{d'}
\]

%--------------------------------------------------------------

\item $d = 4$ and $d' \geq 7$ 
\[
\tilde{D} = \left( \begin{array}{cccc|ccccc}
0 & 1 & 1 & 1 		& 2 & \cdots & \cdots & \cdots & 2\\
1 & 0 & 1 & 1 		& 1 & \cdots & \cdots & \cdots & 1\\
1 & 1 & 0 & 2		& 2 & \cdots & \cdots & 2 & 1\\
1 & 1 & 2 & 0		& 1 & 2 & \cdots & \cdots & 2\\
\hline
1 & 2 & 1 & 1		& 2 & 3 & \cdots & 3 & 2
\end{array} \right) , \quad
\Delta = \left( \begin{array}{cccc|ccccc}
0 & 0 & 1 & 1   & 1 & 0 & \cdots & 0 & 1 \\
2 & 0 & 2 & 2   & 1 & \vdots & & \vdots & 1 \\
1 & 0 & 0 & 2   & 1 & \vdots & & \vdots & 0 \\
1 & 0 & 2 & 0   & 0 & 0 & \cdots & 0 & 1 \\
\hline
0 & 0 & 0 & 0   & 1 & 0 & \cdots & 0 & 0 \\
\end{array} \right)
\]
An optimal coupling is given by
\[
\tilde{\xi}^1 = \frac{1}{4d'} \left( \begin{array}{cccc|ccccc}	
0 & 0 & 0 & 0		& 0 & \cdots & \cdots & \cdots & 0 \\
0 & 0 & 0 & 0		& 0 & \frac{d'}{d'-5} & \cdots & \frac{d'}{d'-5} & 0 \\
0 & 0 & 3 & 0		& 0 & \frac{d' - 7}{d'-5} & \cdots & \frac{d' - 7}{d'-5} & 4 \\
0 & 0 & 0 & 3		& 4 & \frac{d' - 7}{d'-5} & \cdots & \frac{d' - 7}{d'-5} & 0 \\
\hline
4 & 0 & 1 & 1		& 0 & \frac{d' - 6}{d'-5} & \cdots & \frac{d' - 6}{d'-5} & 0 \\
\end{array} \right) \quad
W_1 = \frac{1}{4d'}(8 d' - 32) = 3 - \frac{4}{4} - \frac{8}{d'}
\]

%--------------------------------------------------------------

\item $d = 3$ and $d' \geq 8$ 
\[
\tilde{D} = \left( \begin{array}{cccc|ccccc}
0 & 1 & 1 & 1 		& 2 & \cdots & \cdots & \cdots & 2\\
1 & 0 & 1 & 1 		& 1 & \cdots & \cdots & \cdots & 1\\
1 & 1 & 0 & 2		& 2 & \cdots & \cdots & 2 & 1\\
1 & 1 & 2 & 0		& 1 & 2 & \cdots & \cdots & 2
\end{array} \right)
\]
In this case, the Lagrange multipliers are a bit different and
\[
\begin{array}{rcl}
\Delta &=& \tilde{D} + 2  L_1 + 2 L_2 + L_3 + L_4 
\\ 
&& - 2 C_1 - 2 C_2 - C_3 - C_4 - 2 C_5 - 3 (C_6 + \cdots +  C_{d'}) - 2 C_{d'+1}
\\
&=& 
\left( \begin{array}{cccc|ccccc}
  0 & 1 & 2 & 2 & 2 & 1 & \cdots & 1 & 2\\
  1 & 0 & 2 & 2 & 1 & 0 & \cdots & 0 & 1\\
  0 & 0 & 0 & 2 & 1 & 0 & \cdots & 0 & 0\\
  0 & 0 & 2 & 0 & 0 & 0 & \cdots & 0 & 1
\end{array} \right)
\end{array}
\]
An optimal coupling is given by
\[
\tilde{\xi}^1 = \frac{1}{3d'}
\left(
\begin{array}{cccc|ccccc}
0 & 0 & 0 & 0 		& 0 & \cdots & \cdots & \cdots & 0 \\
0 & 0 & 0 & 0 		& 0 & \frac{d'}{d'-5} & \cdots & \frac{d'}{d'-5} & 0 \\
2 & 0 & 3 & 0 		& 0 &\frac{d'-8}{d'-5} & \cdots & \frac{d'-8}{d'-5} & 3\\
1 & 0 & 0 & 3		& 3 & \frac{d'-7}{d'-5}& \cdots & \frac{d'-7}{d'-5} & 0
\end{array} \right)  \qquad
W_1 = \frac{1}{3d'}(5 d' - 21) = 3 - \frac{4}{3} - \frac{7}{d'}
\]
\end{itemize}

%--------------------------------------------------------------

\subsection{The rectangular parallelepiped}  \label{ann:parallelepiped}

\tikzset{smallnode/.style={circle,inner sep=0pt,minimum size=5mm,draw=black,fill=white}}
\tikzset{fore/.style={ultra thick,draw=white,double=black,double distance=0.8pt}}

\begin{figure}[h] \centering
\begin{tikzpicture}
	\node [smallnode] (0) at (0, 0) {1};
	\node [smallnode] (1) at (2.5, 2) {2};
	\node [smallnode] (2) at (0.5, 2) {3};
	\node [smallnode] (3) at (-2, 0) {4};
	\node [smallnode] (4) at (0, -1) {5};
	\node [smallnode] (5) at (2.5, 1) {6};
	\node [smallnode,draw=gray!60] (6) at (0.5, 1) {7};
	\node [smallnode] (7) at (-2, -1) {8};
	\draw [style=outline] (1) to (2);
	\draw [style=outline] (2) to (3);
	\draw [style=outline] (3) to (7);
	\draw [style=outline] (7) to  node[below=2pt,fill=white] {$b$} (4);
	\draw [style=outline] (4) to node[below right=2pt] {$a$} (5) ;
	\draw [style=outline] (5) to (1);
	\draw [gray!60] (5) to (6);
	\draw [gray!60] (6) to (2);
	\draw [gray!60] (7) to (6);
	\draw [outline] (4) to node[left=1pt,fill=white] {$c$} (0) ;
	\draw [fore] (0) to (1); 
	\draw [fore] (3) to (0);
\end{tikzpicture}
\caption{Rectangular parallelepiped.}
\end{figure}
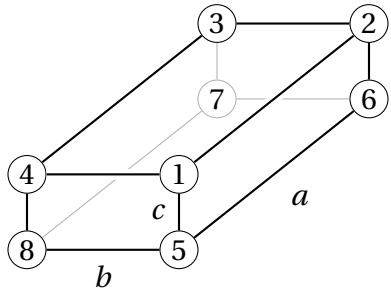

The (Delaunay) cotangent laplacian for the rectangular parallelepiped with edges of lengths 
$|e_{12}| = a$, $| e_{14} | = b$, $| e_{15} | = c$, is given by
\[ 
\Delta_{12} = \frac{b + c}{2 a} , \; \Delta_{14} = \frac{c + a}{2
   b}, \; \Delta_{15} = \frac{a + b}{2 c} \]
so that $\sum_{y \sim 1} d ( 1, y) \Delta_{1 y} = a + b + c$, hence
we normalize to $\mu_x = (1-t) \delta_x + t \dot{\mu}_x$ with
\[ 
\dot{\mu}_1 (2) = \frac{b + c}{2 a ( a + b + c)} 
= \frac{1}{2 a} - \frac{1}{2 ( a + b + c)}
,\;
\dot{\mu}_1 ( 1) = \frac{1}{2}  \left( \frac{1}{a} + \frac{1}{b} +
   \frac{1}{c} - \frac{3}{a + b + c} \right)
\]\[
\dot{\mu}_1 ( 4) = \frac{1}{2 b} - \frac{1}{2( a + b + c)}, \; 
\dot{\mu}_1 ( 5) = \frac{1}{2 c} - \frac{1}{2 ( a + b + c)} 
\]
The distance matrix is
\[ D = \left(\begin{array}{cccccc}
     0 & a & a + b & b & c & c + a\\
     a & 0 & b & a + b & c + a & c\\
     a + b & b & 0 & a & a + b + c & b + c\\
     b & a + b & a & 0 & b + c & a + b + c\\
     c & c + a & a + b + c & b + c & 0 & a\\
     c + a & c & b + c & a + b + c & a & 0
   \end{array}\right) \]
\begin{multline*} 
D - aL_1 + aC_1 + aC_4 + aC_5 - aL_4 - aL_5
\\
= \left( \begin{array}{cccccc}
     0 & 0 & b & b & c & c\\
     2 \hspace{0.25em} a & 0 & b & 2 \hspace{0.25em} a + b & 2 \hspace{0.25em}
     a + c & c\\
     2 \hspace{0.25em} a + b & b & 0 & 2 \hspace{0.25em} a & 2 \hspace{0.25em}
     a + b + c & b + c\\
     b & b & 0 & 0 & b + c & b + c\\
     c & c & b + c & b + c & 0 & 0\\
     2 \hspace{0.25em} a + c & c & b + c & 2 \hspace{0.25em} a + b + c & 2
     \hspace{0.25em} a & 0
   \end{array} \right)
\end{multline*}
A optimal coupling is
\[ 
\xi^t = \left( \begin{array}{cccccc}
     \frac{t}{2}  \left( \frac{1}{a} - \frac{1}{a + b + c} \right)  & 1 -
     \frac{t}{2}  \left( \frac{2}{a} + \frac{1}{b} + \frac{1}{c} - \frac{4}{a
     + b + c} \right) & 0 & 0 & 0 & 0\\
     0 & \frac{t}{2}  \left( \frac{1}{a} - \frac{1}{a + b + c} \right) & 0 & 0
     & 0 & 0\\
     0 & 0 & 0 & 0 & 0 & 0\\
     0 & 0 & \frac{t}{2}  \left( \frac{1}{b} - \frac{1}{a + b + c} \right) & 0
     & 0 & 0\\
     0 & 0 & 0 & 0 & 0 & \frac{t}{2}  \left( \frac{1}{c} - \frac{1}{a + b + c}
     \right)\\
     0 & 0 & 0 & 0 & 0 & 0
   \end{array} \right) \]
with cost $W_1 = \displaystyle a \left( 1 - t \left( \frac{1}{a} - \frac{1}{a + b + c}
\right) \right)$ and curvature $\displaystyle \ric = \frac{1}{a} - \frac{1}{a + b +
c}$ along an edge of length $a$ as claimed. 

%--------------------------------------------------------------

\paragraph{Acknowledgements.}

The second author wishes to thank Djalil Chafa\"{i} for his stimulating remarks
and for pointing out the work of Ollivier.

%\paragraph{Author Contributions} \hfill\break
%
%This work was started as a master degree memoir of the first author under the direction 
%of the second author, at the Universit\'e Paris-Est Marne-la-Vall\'ee, and later expanded
%by both authors.

%%%%%%%%%%%%%%%%%%%%%%%%%%%%%%%%%%%%%%%%%%

\bibliographystyle{alpha}
\bibliography{lite}

\bigskip
\noindent
Beno\^{i}t Loisel \\
ENS de Lyon, 15 parvis Ren\'e Descartes - BP 7000 69342 Lyon Cedex 07, France \\
benoit.loisel@ens-lyon.fr
\\

\noindent
Pascal Romon \\
Universit\'e Paris-Est, LAMA (UMR 8050), F-77454, Marne-la-Vall\'ee, France \\
pascal.romon@u-pem.fr

\end{document}